\newtheorem{theorem}{Theorem}[section]
\newtheorem{lemma}[theorem]{Lemma}
\newtheorem{corollary}[theorem]{Corollary}
\theoremstyle{definition}
\newtheorem{example}[theorem]{Example}
\newtheorem{definition}[theorem]{Definition}
\newtheorem{construction}[theorem]{Construction}
\title{Beyond uniform cyclotomy}
\author[1]{Sophie Huczynska}
\author[2]{Laura Johnson}
\author[3]{Maura B. Paterson}
\affil[1]{School of Mathematics and Statistics, University of St Andrews, St
Andrews, KY16 9SS, Scotland, UK}
\affil[2]{School of Mathematics, University of Bristol, Bristol, BS8 1UG, UK}
\affil[3]{School of Computing and Mathematical Sciences, Birkbeck, University of London,
Malet St, London, WC1E 7HX, UK}
\date{MSC codes: Primary 11T22, Secondary 05B25, 05B10}                
\begin{document}
\maketitle

\begin{abstract}
Cyclotomy, the study of cyclotomic classes and cyclotomic numbers, is an area of number theory first studied by Gauss. It has natural applications in discrete mathematics and information theory.  Despite this long history, there are significant limitations to what is known explicitly about cyclotomic numbers, which limits the use of cyclotomy in applications. The main explicit tool available is that of uniform cyclotomy, introduced by Baumert, Mills and Ward in 1982. In this paper, we present an extension of uniform cyclotomy which gives a direct method for evaluating all cyclotomic numbers over  ${\rm GF}(q^n)$ of order dividing \makebox{$(q^n-1)/(q-1)$}, for any prime power $q$ and $n \geq 2$, which does not use character theory nor direct calculation in the field. This allows the straightforward evaluation of many cyclotomic numbers for which other methods are unknown or impractical,  extending the currently limited portfolio of tools to work with cyclotomic numbers.  Our methods exploit connections between cyclotomy, Singer difference sets and finite geometry.
\end{abstract}

\section{Introduction}
Cyclotomy, the study of cyclotomic classes and cyclotomic numbers, is an area of number theory first studied by Gauss \cite{Gau}. It is of considerable theoretical interest and has important applications, both to other areas of pure mathematics such as combinatorics and graph theory, and beyond pure mathematics to communications, information security and experiment design.  Many constructions for difference sets, difference families, strongly regular graphs, binary sequences, codes and other combinatorial structures are based on cyclotomy 
\cite{BroWilXia, CarLiMes, ChaCheZho, FanWenFu, FenMomXia, MomWanXia, SchWhi, WanTao, WenYanFuFen, Wil, YanYaoZhe}. Work on evaluating cyclotomic numbers has been carried out for many decades \cite{BauFre, Gau, Leh, LeoWil, Sto, Whi,  XiaYan, Zee}.

Despite this long history however, there are very significant limitations to what is known explicitly about cyclotomic numbers. Expressions for cyclotomic numbers are most commonly given via character sums (typically, Jacobi sums).  However, results are currently not known for orders beyond order 24 \cite{BerEvaWil} and not all known results are given explicitly \cite{FenMomXia}.  Moreover the expressions obtained from character sums are awkward to work with or evaluate explicitly, require the use of tables, and many contain sign ambiguity.  (We note that in  \cite{NguSch}, methods for fast computation of Gauss sums were presented; these may be used to obtain cyclotomic numbers via Jacobi sums.)  It was as recently as 1987 \cite{KatRaj} that an expression without sign ambiguity was obtained for the cyclotomic numbers of order $4$.  Hence, although cyclotomic classes have very useful structural properties which make them desirable to use in various applications, practictioners are generally limited to using specific small values of $e$, via direct calculation in the finite fields.

The cyclotomic classes of order $e$ of ${\rm GF}(q^n)$ ($e \mid q^n-1$), written $C_i^e$ ($0 \leq i \leq e-1$), are the multiplicative cosets of the subgroup of index $e$ in ${\rm GF}(q^n)^*$.  The cyclotomic number $(i,j)_e$ counts the number of solutions to the equation $z_i+1=z_j$ where $z_i \in C_i^e, z_j \in C_j^e$.

To date, the main theoretical approach that offers a direct approach to the evaluation of cyclotomic numbers is uniform cyclotomy.  Uniform cyclotomy (sometimes called the semi-primitive case) has been used to construct a variety of structures including strongly regular graphs \cite{BroWilXia, FenMomXia, MomWanXia}, cyclic codes \cite{YanYaoZhe}, constacyclic codes \cite{FanWenFu}, linear codes with small hull \cite{CarLiMes, WanTao}, external difference families \cite{WenYanFuFen}, partial geometric designs \cite{ChaCheZho} and many more.  The following definition was introduced by Baumert, Mills and Ward in \cite{BauMilWar}.

\begin{definition}\label{Baumert}
The cyclotomic numbers $(i,j)_e$ over $ {\rm GF}(q)$ are \emph{uniform} if $(0,i)_e=(i,0)_e=(i,i)_e=(0,1)_e$ for $i \neq 0$, and $(i,j)_e=(1,2)_e$ for $0 \neq i \neq j \neq 0$.
\end{definition}

The cyclotomic numbers of order $2$ are known explicitly \cite{Gau, Sto}.  They are uniform precisely when $f$ is even, in which case $(0,0)_2=(f-2)/2$ and $(0,1)_2=(1,0)_2=(1,1)_2=f/2$.  The following result is proved in \cite{BauMilWar}:

\begin{theorem}\label{BMWTheorem}
Let $q$ be a power of a prime $p$ and let $q=ef+1$, $e \geq 3$.  The cyclotomic numbers of order $e$ over ${\rm GF}(q)$ are uniform if and only if $-1$ is a power of $p$ modulo $e$. \\
 If this holds, then either $p=2$ or $f$ is even; $q = r^2$ with $r \equiv 1 \mod e$; and setting $\eta=\frac{r-1}{e}$, we have:
\begin{itemize}
    \item[(i)] $(0,0)_e = \eta^2-(e-3)\eta-1$
    \item[(ii)] $(0,i)_e = (i,0)_e = (i,i)_e = \eta^2+\eta$ for $i\neq{0}$
    \item[(iii)] $(i,j)_e =\eta^2$ for  $0 \neq i \neq j$.
    \end{itemize}
\end{theorem}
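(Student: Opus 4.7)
The approach is via Gauss and Jacobi sums. Fix a multiplicative character $\chi$ of $\mathrm{GF}(q)^*$ of order $e$ and let $\psi$ be the canonical additive character; write $g(\chi^a) = \sum_{x \ne 0} \chi^a(x)\psi(x)$ and $J(\chi^a,\chi^b) = \sum_{x \ne 0,1} \chi^a(x)\chi^b(1-x)$. Since $\chi^a$ is constant on each class $C_i^e$, taking the value $\zeta_e^{ai}$, Fourier inversion on $(\mathbb{Z}/e\mathbb{Z})^2$ expresses
\[
e^2\,(i,j)_e \;=\; \sum_{a,b=0}^{e-1} \zeta_e^{-ai-bj}\,\chi^a(-1)\, J(\chi^a,\chi^b) \;+\; (\text{boundary terms}),
\]
and the identity $J(\chi^a,\chi^b) = g(\chi^a)g(\chi^b)/g(\chi^{a+b})$ (when $\chi^{a+b}$ is non-trivial) reduces the problem to evaluating the Gauss sums $g(\chi^a)$ under the given hypothesis.

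For sufficiency, assume $p^s \equiv -1 \pmod e$ with $s$ minimal, so $p$ has order $2s$ in $(\mathbb{Z}/e\mathbb{Z})^*$; since this order divides $n$ (where $q=p^n$), $n$ is even. Writing $n/2 = sk$ one has $p^{n/2} \equiv (-1)^k \pmod e$, and setting $r = (-1)^k p^{n/2}$ gives $r^2 = q$ with $r \equiv 1 \pmod e$. Because $r$ is odd (for $p$ odd) and $e \mid r-1$, the identity $ef = (r-1)(r+1)$ then forces $f$ even, yielding the ``$p=2$ or $f$ even'' conclusion. The key Gauss-sum evaluation is then: the Frobenius automorphism $\phi_p \colon \zeta_e \mapsto \zeta_e^p$ (fixing $\zeta_p$) sends $g(\chi^a) \mapsto g(\chi^{ap})$, so
\[
\phi_p^s(g(\chi^a)) \;=\; g(\chi^{-a}) \;=\; \chi^a(-1)\,\overline{g(\chi^a)},
\]
and combining with $|g(\chi^a)|^2 = q$ forces each non-trivial $g(\chi^a)$ into the two-element set $\{+r,-r\}$. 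Substituting back into the Fourier formula, each Jacobi sum becomes $\pm r$ and the remaining $\zeta_e$-sums reduce to geometric series; a three-way case analysis---treating $(0,0)$, then $(0,i)=(i,0)=(i,i)$ for $i\ne 0$, and finally $(i,j)$ with $0\ne i\ne j\ne 0$---yields the claimed formulas with $\eta = (r-1)/e$.

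For necessity, I would argue that uniformity of the $(i,j)_e$ forces the Gauss periods $\eta_i = \sum_{x \in C_i^e}\psi(x)$ to take only two distinct values among $\{\eta_0,\eta_1,\dots,\eta_{e-1}\}$, since the Gauss-period multiplication rule $\eta_i\eta_j = \sum_k (j-i,k-i)_e\,\eta_k + (\text{boundary})$ collapses dramatically under the uniform hypothesis. Then $g(\chi^a) = \sum_i \zeta_e^{ai}\eta_i$ has a Galois orbit of length at most $2$ under $\langle\phi_p\rangle$, and combined with $|g(\chi^a)|^2 = q$ this obliges $\chi$ and $\chi^{-1}$ to lie in the same Frobenius orbit, i.e.\ $-1 \equiv p^s \pmod e$ for some $s$. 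The main obstacle throughout is sign control: showing $|g(\chi^a)| = \sqrt{q}$ is immediate, but pinning down whether $g(\chi^a) = +r$ or $-r$---so that the constant $-1$ in formula (i) comes out correctly and the normalisation $r \equiv 1 \pmod e$ is respected---requires a Stickelberger-type $p$-adic congruence on $g(\chi)$, and is traditionally the delicate step; once the sign is settled, the case analysis yielding (i)--(iii) is purely mechanical.
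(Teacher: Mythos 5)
First, a point of orientation: the paper does not prove Theorem~\ref{BMWTheorem} at all --- it is quoted verbatim from Baumert--Mills--Ward \cite{BauMilWar} as background. The closest thing to an in-paper proof is the later combinatorial re-derivation of the formulas (i)--(iii) in the sufficiency direction: Theorem~\ref{alt_char} converts the condition ``$-1$ is a power of $p$ modulo $e$'' into ``$e$ divides $(p^{2s}-1)/(p^d-1)$'', and Corollary~\ref{UniformCor} then recovers exactly the values $\eta^2-(e-3)\eta-1$, $\eta^2+\eta$, $\eta^2$ by counting residues in Singer-type sets, with no characters at all. Your route --- Fourier inversion to Jacobi sums, $J=gg/g$, and evaluation of semi-primitive Gauss sums --- is the classical one and is essentially how \cite{BauMilWar} and Berndt--Evans--Williams proceed, so it is a sensible blind reconstruction; the sufficiency skeleton (order of $p$ is $2s$, hence $n$ even, $r=(-1)^kp^{n/2}\equiv 1 \pmod e$, $f=\eta(r+1)$ even for odd $p$, $\phi_p^s(g(\chi^a))=\chi^a(-1)\overline{g(\chi^a)}$ forcing $g(\chi^a)\in\{\pm r\}$) is correct.

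However, as written the proposal has two genuine gaps. The first is the one you yourself flag: you never determine \emph{which} of $+r$ or $-r$ each $g(\chi^a)$ equals, deferring this to ``a Stickelberger-type $p$-adic congruence''. This is not a detail that can be waved through --- the signs (which in general vary with $a$, since $\chi^a$ may have smaller order and a different parity parameter $t_a$) are precisely what produce the $-1$ in (i) and distinguish $\eta^2+\eta$ from $\eta^2-\eta$ in (ii), so without carrying out that congruence the stated formulas are not established. The second gap is the necessity direction, which is a heuristic rather than an argument: the claims that uniformity ``forces the Gauss periods to take only two distinct values'' and that a Frobenius orbit of length at most $2$ for $g(\chi)$ ``obliges'' $-1\equiv p^s\pmod e$ are asserted, not proved. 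The latter step in particular needs care, since $|g(\chi^a)|=\sqrt q$ together with strong rationality/orbit constraints on Gauss sums also occurs outside the semi-primitive case (e.g.\ the index-$2$ evaluations), so one must actually rule those configurations out; this is the harder half of the Baumert--Mills--Ward theorem and your sketch does not yet engage with it.
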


Thus, if the cyclotomic numbers of order $e$ are uniform over $ {\rm GF}(q)$, then $q$ is a square; say $q=p^{2s}$ for some $s \in \mathbb{N}$.  It is shown in \cite{BauMilWar} that the cyclotomic numbers of order $e \mid p^s+1$ are uniform, by first establishing the following:

\begin{theorem}\label{BMWthm1}
Consider $ {\rm GF}(p^{2s})$ and let $p^{2s}=ef+1$. If $e=p^s+1$ then:
\begin{itemize}
\item[(i)] $(0,0)_e=f-1$
\item[(ii)] $(0,i)_e=(i,0)_e=(i,i)_e=0$ for all $0<i<e$
\item[(iii)] $(i,j)_e=1$ for $0 < i\neq j \leq e$.
\end{itemize}
\end{theorem}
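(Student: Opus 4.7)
The plan is to exploit the fact that the subgroup $C_0^e$ coincides with the multiplicative group of the subfield $\mathrm{GF}(p^s) \subset \mathrm{GF}(p^{2s})$, which reduces everything to elementary linear algebra over $\mathrm{GF}(p^s)$. Since $f = (p^{2s}-1)/(p^s+1) = p^s - 1$, the class $C_0^e$ is a subgroup of $\mathrm{GF}(p^{2s})^*$ of order $p^s-1$, and by uniqueness of such a subgroup it must equal $\mathrm{GF}(p^s)^*$. More generally, if $\omega$ is a primitive element of $\mathrm{GF}(p^{2s})$, then $C_i^e = \omega^i \mathrm{GF}(p^s)^*$, so each $L_i := C_i^e \cup \{0\}$ is a one-dimensional $\mathrm{GF}(p^s)$-subspace of $\mathrm{GF}(p^{2s})$; as $i$ ranges over $\{0,1,\dots,e-1\}$, the $L_i$ exhaust the $p^s+1$ such subspaces.

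With this in hand, the three cases become transparent. For (i), counting $z\in C_0^e$ with $z+1\in C_0^e$ is counting $z\in\mathrm{GF}(p^s)^*$ with $z+1\in\mathrm{GF}(p^s)^*$: the only forbidden choices are $z=0$ and $z=-1$, giving $p^s - 2 = f-1$ solutions. For (ii), suppose $i\neq 0$. If $z_0+1=z_i$ with $z_0\in C_0^e$, then $z_i \in \mathrm{GF}(p^s)=L_0$ but also $z_i \in L_i\setminus\{0\}$, contradicting $L_0 \cap L_i = \{0\}$; so $(0,i)_e=0$, and $(i,0)_e=0$ by the symmetric argument. If $z_i + 1 = z_i'$ with $z_i,z_i'\in C_i^e$, then $1 = z_i'-z_i \in L_i$, but $1\in L_0 \neq L_i$, so $(i,i)_e=0$.

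For (iii), let $0\neq i\neq j\neq 0$. The subspaces $L_i$ and $L_j$ are distinct one-dimensional $\mathrm{GF}(p^s)$-subspaces, so any nonzero $e_i\in L_i$ and $e_j\in L_j$ form a $\mathrm{GF}(p^s)$-basis of $\mathrm{GF}(p^{2s})$. Writing $1 = ae_i + be_j$ with $a,b \in \mathrm{GF}(p^s)$ uniquely determined, the hypothesis $1 \notin L_i \cup L_j$ (since $1 \in L_0$ and $L_0 \neq L_i, L_j$) forces $a,b \neq 0$. Any solution to $z_j - z_i = 1$ with $z_i = ce_i$, $z_j = de_j$ then requires $c=-a$ and $d=b$, yielding exactly one pair, and both coordinates are automatically nonzero so that $z_i \in C_i^e$ and $z_j \in C_j^e$. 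This gives $(i,j)_e = 1$.

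There is no serious obstacle here; the only conceptual step is the identification $C_0^e = \mathrm{GF}(p^s)^*$, after which each case reduces either to a trivial count in a subfield or to the fact that two distinct lines through the origin of a two-dimensional vector space meet only at $0$ and together span the space.
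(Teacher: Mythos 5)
Your proof is correct and takes essentially the same approach the paper ascribes to Baumert, Mills and Ward and re-derives through Theorem~\ref{thm:main}: identify $C_0^e$ with $\mathrm{GF}(p^s)^*$, so that each $C_i^e\cup\{0\}$ is a line through the origin of the two-dimensional $\mathrm{GF}(p^s)$-vector space $\mathrm{GF}(p^{2s})$, making (i) and (ii) immediate and reducing (iii) to uniqueness of coordinates with respect to a basis. The only cosmetic difference is in (iii), where the paper's general argument shows each admissible class occurs at most once and then counts, while you exhibit the unique solution directly; both rest on the same two-dimensionality.
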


What underpins this uniform cyclotomy result, is the guaranteed presence of a subfield.  Consider the proof of Theorem \ref{BMWthm1} in \cite{BauMilWar}.  Let $\alpha$ be a primitive element of $ {\rm GF}(q)$; then $\alpha^e$ is a generator of $ {\rm GF}(p^s)$, i.e. the cyclotomic class $C_0^e=\langle \alpha^e \rangle$ is $ {\rm GF}(p^s)^*$ (see Definition~\ref{def:cclass}).  Closure under subtraction guarantees that $\alpha^{es+i}+1=\alpha^{et}$ is impossible for $0 <i<e$, and hence $(i, 0)_e=(0,i)_e=(i,i)_e=0$ for all non-zero $i$.  For $i \neq j$ both nonzero, consideration of the equation $\alpha^{es+i}+1=\alpha^{et+j}$ leads to forming the quadratic extension field of $ {\rm GF}(p^s)$ by adjoining $\alpha^j$; the uniqueness of the representation for $\alpha^i$ in this field guarantees $(i,j)_e=1$ for all $0< i \neq j <e$.

We present an alternative characterisation of the uniformity condition in Theorem \ref{BMWTheorem} which motivates this paper:
\begin{theorem}\label{alt_char}
Let $p$ be a prime, $s \in \mathbb{N}$ and let $p^{2s}=ef+1$ for some $e,f \in \mathbb{N}$, $e \geq 3$.
\begin{itemize}
\item[(i)] $-1 \equiv p^x \mod e$ for some $x \in \mathbb{N}$ if and only if $-1 \equiv p^t \mod e$ for some $t \mid s$, i.e. if and only if $e \mid p^t+1$ for some $t \mid s$.
\item[(ii)] Let $t \mid s$.  If $e \mid p^t+1$, then $e \mid (p^{2s}-1)/(p^d-1)$ for any $d \mid t$.
\end{itemize}
\end{theorem}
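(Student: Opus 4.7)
My plan is to handle the two parts in turn. Part (i) is really a statement about the multiplicative order of $p$ modulo $e$; part (ii) is a geometric series computation.

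For part (i), the reverse implication is immediate. For the forward direction, suppose $p^x \equiv -1 \pmod e$, and let $m$ denote the multiplicative order of $p$ modulo $e$. First I would reduce $x$ modulo $m$: since $p^x \not\equiv 1$ we have $m \nmid x$, so we may replace $x$ by its residue and assume $0 < x < m$. Then $p^{2x} \equiv 1$ forces $m \mid 2x$, and combined with $x < m$ this gives $2x = m$, so $m$ is even and $p^{m/2} \equiv -1 \pmod e$. Now set $t = m/2$. Because $e \mid p^{2s}-1$ we have $m \mid 2s$, hence $t \mid s$, and by construction $e \mid p^t+1$. This proves (i).

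For part (ii), assume $t \mid s$ and $e \mid p^t+1$, and let $d \mid t$. Since $d \mid t \mid s$ we have $p^d-1 \mid p^t-1 \mid p^{2s}-1$, so the quotient is an integer and expands as the geometric sum
\[
\frac{p^{2s}-1}{p^d-1} \;=\; \sum_{i=0}^{2s/d - 1} p^{id}.
\]
The key observation is that $p^{(t/d)\cdot d} = p^t \equiv -1 \pmod e$. Hence the sequence $(p^{id} \bmod e)$ is antiperiodic with half-period $t/d$: for $0 \le j < t/d$,
\[
p^{(j+t/d)\,d} \;\equiv\; -\,p^{jd} \pmod e.
\]
Therefore the $2(t/d)$ consecutive terms starting at any multiple of $2t/d$ sum to $0 \pmod e$. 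Since $2t/d$ divides $2s/d$ (because $t \mid s$), the full range of summation partitions into $s/t$ such blocks, each summing to $0$ modulo $e$, which gives $e \mid (p^{2s}-1)/(p^d-1)$.

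The whole argument is elementary; the only mildly delicate point is in part (i), where one must argue that $x$ can be taken to equal $m/2$ exactly rather than merely being an arbitrary odd multiple of $m/2$, since this is what forces $t \mid s$ from $m \mid 2s$. Once the reduction $0 < x < m$ is made, this falls out immediately. In part (ii), the only thing to verify carefully is that $2t/d$ really does divide $2s/d$, which is just a restatement of $t \mid s$.
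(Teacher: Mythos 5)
Your proof is correct. For part (i) you and the paper are after the same structural fact --- that the least exponent $t$ with $p^t \equiv -1 \pmod e$ divides $s$ --- but you reach it differently: the paper takes $r$ minimal with $p^r \equiv -1 \pmod e$, writes $s = ar+b$ by the division algorithm, and rules out $b>0$ by a periodicity argument, along the way asserting that $p^{2s} \equiv 1$ forces $p^s \equiv \pm 1 \pmod e$ (true here because $p^s$ lies in the cyclic group $\langle p\rangle$, which contains $-1$ and hence has a unique element of order $2$, but this is left unjustified for composite $e$). Your route through the multiplicative order $m$ --- reducing to $0<x<m$, deducing $2x=m$, and then getting $t=m/2 \mid s$ directly from $m \mid 2s$ --- sidesteps that point entirely and is, if anything, tidier. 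For part (ii) the two arguments are genuinely different: the paper exhibits the one-line integer factorisation
\[
\frac{p^{2s}-1}{p^d-1}=(p^t+1)\left(\frac{p^t-1}{p^d-1}\right)\left(\frac{p^{2s}-1}{p^{2t}-1}\right),
\]
from which $e \mid p^t+1$ immediately gives the divisibility, whereas you expand the quotient as a geometric sum in $p^d$ and cancel it in antiperiodic blocks of length $2t/d$. Both are valid; the factorisation is shorter, while your block-cancellation makes visible exactly where the hypothesis $p^t \equiv -1 \pmod e$ enters.
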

\begin{proof}
(i) Suppose that  $-1 \equiv p^x \mod e$ for some $x \in \mathbb{N}$.  Let $r$ be the smallest positive integer such that  $-1 \equiv p^r \mod e$.   Use the Division Algorithm to write $s=ar+b$ for some $a,b \in \mathbb{Z}$ with $0 \leq b <r$.  Now,  $p^{2s} \equiv 1 \mod e$, so $p^s \equiv \pm 1 \mod e$.  Then
$$ \pm 1 \equiv p^s \equiv p^{ar+b}  \equiv (p^r)^a \cdot p^b \equiv \pm p^b \mod e.$$
So $p^b \equiv \pm 1 \mod e$.  We claim that $b=0$.  Suppose $b>0$.  By minimality of $r$, $p^b \not\equiv -1 \mod e$.  So $p^b \equiv 1 \mod e$, and hence the sequence $p^0, p^1, p^2, \ldots$ taken modulo $e$ contains only the elements of $\{1,p,p^2, \ldots, p^{b-1}\}$ taken modulo $e$, repeated with period $b$.  By minimality of $r$,  $p^i \not \equiv -1 \mod e$ for all $0 \leq i \leq b-1$ and so  $p^i \not \equiv -1 \mod e$ for all $i \geq 0$, a contradiction.  So $b=0$ and $r \mid s$.  Hence we can take $t=r$.  The reverse direction is immediate.\\
(ii) Let $t \mid s$ and suppose $d \mid t$.  Then we have the integer factorisation:
$$ \frac{p^{2s}-1}{p^d-1}=(p^t+1)\left(\frac{p^t-1}{p^d-1}\right)\left( \frac{p^{2s}-1}{p^{2t}-1}\right).$$
\end{proof}

In this paper, we present results which extend uniform cyclotomy.  We give a combinatorial characterisation and direct evaluation method for all cyclotomic numbers of order $e (\geq 2)$ where $e \mid (q^n-1)/(q-1)$, for any $n \geq 2$ and any prime power $q$.   By Theorem \ref{alt_char},  taking $q=p^s$ and $n=2$ yields the uniform cyclotomy result of \cite{BauMilWar} with $e \mid p^s+1$, while for the other uniform cyclotomy cases where $e \mid p^t+1$ and $t$ is a proper divisor of $s$, we take $q=p^d$ and $n=2s/d$ for suitable $d \mid t$.   While the expressions in the $n>2$ cases are not quite as simple as in the $n=2$ case, we provide an explicit theoretical description allowing straightforward direct evaluation working in  $\mathbb{Z}_e$ ($e \in \mathbb{N}$), which does not involve character sums nor computation in the finite field.  This allows evaluation of many cyclotomic numbers for which no direct methods are known. 

A key computational contribution of the paper is to extend the currently limited portfolio of tools available to work with cyclotomic numbers, and to provide greater breadth without increasing computational requirements. Depending on the parameters used, and methods of storage/lookup, the computation complexity of our approach is comparable to the naive method of direct evaluation in the finite field, though likely to be better in some small cases.  However, in practice, practitioners do not typically perform the general computation outside of a very narrow window.    Hence one important contribution of our approach is to give a practical and accessible method for working directly with a wider range of cyclotomic numbers than is currently considered.  Our approach facilitates human/computer interaction and can be applied at different levels of computer involvement (from using the computer to produce the initial Singer difference set, to using it to perform the whole calculation).  The algorithm is straightforward to implement and does not require calculations in extension fields or any other techniques that would require a computer algebra package.

\section{Background}\label{sec:background}

\subsection{Cyclotomy}
We follow the definitions of \cite{Sto}.  Throughout, let $q$ be a power of a prime $p$ and let $ {\rm GF}(q)$ denote the (unique up to isomorphism) finite field of order $q$.  Let $\alpha$ be a primitive element of $ {\rm GF}(q)$.  

\begin{definition}\label{def:cclass}
Let $q=ef+1$ where $e,f \in \mathbb{N}$.  The cyclotomic classes $C_i^e$ in $ {\rm GF}(q)$ of order $e$ ($0 \leq i \leq e-1$) are defined to be:
$$ C_i^e=\{\alpha^{es+i}: 0 \leq s \leq f-1\}.$$
Note that $C_0^e$ is the multiplicative subgroup of $ {\rm GF}(q)$ of cardinality $f$.
\end{definition}

\begin{definition}
Let $q=ef+1$ where $e,f \in \mathbb{N}$.  The cyclotomic numbers $(i,j)_e$ with $0 \leq i,j \leq e-1$ are defined to be the number of solutions to the equation 
$$ z_i=z_j-1$$
where $z_i \in C^e_i$ and $z_j \in C^e_j$.  This is the number of ordered pairs $(s,t)$ with $0 \leq s,t \leq f-1$ such that 
$$ \alpha^{es+i}=\alpha^{et+j}-1$$
\end{definition}

The following properties are known for cyclotomic numbers (see \cite{Sto, WenYanFuFen}):
\begin{lemma}\label{Storer}
\begin{itemize}
\item[(i)] For any $m,n \in \mathbb{Z}$, $(i,j)_e=(i+me, j+ne)_e$
\item[(ii)] $(i,j)_e=(e-i,j-i)_e$
\item[(iii)] $(i,j)_e$ equals $(j,i)_e$ if $p=2$ or if  $p$ is odd and $f$ is even, and $(i+\frac{e}{2}, j+\frac{e}{2})_e$ if $p$ is odd and $f$ is odd
\item[(iv)] $(i,j)_e=(pi,pj)_e$
\item[(v)] $\sum_{i=0}^{e-1} (i,j)_e =f-\delta_{0j} $ where $\delta_{0j}=1$ if $j=0$ and $0$ if $j \neq 0$.
\end{itemize}
\end{lemma}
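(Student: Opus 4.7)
My plan is to dispose of four of the five parts by writing down explicit bijections between the solution sets $S_{i,j} := \{(z_i, z_j) \in C_i^e \times C_j^e : z_i + 1 = z_j\}$ counted by the cyclotomic numbers in question, and to handle part (v) by a direct enumeration.

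Part (i) is immediate from the definition $C_i^e = \{\alpha^{es+i} : 0 \le s \le f-1\}$: since $\alpha^{q-1}=1$ and $ef = q-1$, replacing $i$ by $i+me$ merely permutes the parameter $s$ cyclically modulo $f$, so $C_{i+me}^e = C_i^e$ and the class indices live naturally in $\mathbb{Z}_e$. For part (iv), the Frobenius map $\sigma : x \mapsto x^p$ is a field automorphism that sends $e$-th powers to $e$-th powers, hence fixes the subgroup $C_0^e$ and acts on cosets by $\sigma(C_i^e) = \alpha^{pi} C_0^e = C_{pi \bmod e}^e$; as $\sigma$ preserves the equation $z_i+1 = z_j$, the map $(z_i,z_j) \mapsto (\sigma(z_i),\sigma(z_j))$ provides a bijection $S_{i,j} \to S_{pi,pj}$.

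For part (ii), I would divide the equation $z_j - z_i = 1$ through by $z_i$ to rewrite it as $1/z_i + 1 = z_j/z_i$. Since $1/z_i \in C_{e-i}^e$ and $z_j/z_i \in C_{j-i}^e$, the assignment $(z_i, z_j) \mapsto (z_i^{-1}, z_j z_i^{-1})$ gives a bijection $S_{i,j} \to S_{e-i,\,j-i}$. For part (iii), the substitution $(z_i, z_j) \mapsto (-z_j, -z_i)$ transforms $z_i + 1 = z_j$ into $-z_j + 1 = -z_i$, so the whole statement reduces to locating $-1$ within the cyclotomic classes. If $p = 2$ then $-1 = 1 \in C_0^e$; if $p$ is odd then $-1 = \alpha^{(q-1)/2} = \alpha^{ef/2}$, so $-1 \in C_0^e$ whenever $f$ is even, while if $f$ is odd then $e$ must be even and $-1 \in C_{e/2}^e$. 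Reading off which classes $-z_i$ and $-z_j$ now land in yields the three stated identities.

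Finally, for (v) I would fix $j$ and sum over $i$. As $i$ ranges over $\mathbb{Z}_e$ the classes $C_i^e$ partition $\mathrm{GF}(q)^*$, so $\sum_{i=0}^{e-1}(i,j)_e$ counts the $z_j \in C_j^e$ with $z_j - 1 \in \mathrm{GF}(q)^*$, i.e.\ $z_j \neq 1$. Since $1 \in C_0^e$, this is $f-1$ when $j=0$ and $f$ otherwise, which is $f - \delta_{0j}$. The only mildly delicate step is the case analysis in (iii) on where $-1$ sits; once that is settled, every part of the lemma collapses to writing down the correct bijection or the correct bookkeeping identity.
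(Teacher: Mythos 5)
The paper does not prove this lemma at all --- it is quoted as a known result with a citation to Storer's book --- so there is no in-paper argument to compare against; your self-contained bijective proof is the right way to verify it, and all five parts go through. Parts (i), (ii), (iv) and (v) are exactly the standard arguments and are correct as you state them (for (iv) one should note, as you implicitly do, that $c\mapsto c^p$ permutes $C_0^e$ because $\gcd(p,f)=1$, so $\sigma(C_i^e)=C_{pi}^e$). The one point worth making explicit concerns (iii): your substitution $(z_i,z_j)\mapsto(-z_j,-z_i)$ sends a solution counted by $(i,j)_e$ to one counted by $(j+m,i+m)_e$ where $-1\in C_m^e$, so in the case $p$ odd, $f$ odd (whence $e$ is even and $m=e/2$) it yields $(i,j)_e=(j+\tfrac{e}{2},i+\tfrac{e}{2})_e$ --- note the transposed indices. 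That is the classical form of the identity, and it is the correct one: for $q=13$, $e=4$, $f=3$ one has $(0,2)_4=2$ but $(2,0)_4=0$, so the version printed in the statement, $(i,j)_e=(i+\tfrac{e}{2},j+\tfrac{e}{2})_e$, fails, while $(0,2)_4=(2+2,0+2)_4=(0,2)_4$ holds. So your argument does not quite ``yield the stated identity'' as literally printed; it yields the corrected identity, and the discrepancy is a transcription slip in the statement rather than a gap in your proof. Since this symmetry is invoked later (e.g.\ in the proof of the $\epsilon\mid e$ theorem, where only the $f$ even or $p=2$ branch is actually used), the slip is harmless there, but you should record the correct index order.
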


Character sum expressions are known for many cyclotomic numbers of order up to $e=24$ \cite{BerEvaWil}.  Some upper bounds on cyclotomic numbers are also known \cite{BetHirKomMun} and a congruence expression is given in \cite{Tha}.  We will provide a method to directly evaluate cyclotomic numbers by exploiting connections with linear algebra and finite geometry.  

\subsection{The vector space viewpoint}
Let $q$ be a power of some prime $p$ and $n\in \mathbb{N}$ with $n\geq 2$.  We may consider $ {\rm GF}(q^n)$ as a dimension-$n$ vector space over $ {\rm GF}(q)$ with basis $\{1,\alpha,\alpha^2, \ldots, \alpha^{n-1} \}$, where $\alpha$ is a primitive element of $ {\rm GF}(q^n)$.  So \makebox{$ {\rm GF}(q^n)=\{a_0 + a_1 \alpha + \cdots + a_{n-1} \alpha^{n-1}: a_i\in {\rm GF}(q), 0 \leq i \leq n-1  \}$}  and in particular each element $\alpha^k$ of $ {\rm GF}(q)^*$ ($0 \leq k \leq q-1$) has a unique expression as a nonzero polynomial in $\alpha$ over $ {\rm GF}(q)$, of degree at most $n-1$.  The polynomial $a_0+ a_1 \alpha+ \cdots a_{n-1}\alpha^{n-1}$ may be expressed as the $n$-tuple $(a_0,\ldots,a_{n-1})$. We will work in the projective space ${\rm PG}(n-1,q)$, i.e. we will take the non-zero $n$-tuples and identify two $n$-tuples if they differ by a scalar factor.  This is equivalent to considering the factor ring $ {\rm GF}(q^n)^* / {\rm GF}(q)^*$.  

Viewing $ {\rm GF}(q^n)$ as a vector space allows us to work with its vector subspaces.  Of particular interest are the subspaces of codimension $1$ (hence dimension $n-1$). When we take the quotient of one of these spaces by the scalars, we obtain a hyperplane of ${\rm PG}(n-1,q)$.  A classic combinatorial application of these hyperplanes is in constructing Singer difference sets (see \cite{Ber, Sin, Sto}).  A hyperplane may be defined by a suitable linear equation in the coordinates. For example, we could take $a_0=0$, or $a_{n-1}=0$, or $\operatorname{Tr}_{n/1}(a_0 +a_1 \alpha + \cdots + a_{n-1}\alpha^{n-1})=0$ \cite{Handbook}.  Multiplying elements of ${\rm GF}(q^n)$ by $\alpha$ is a ${\rm GF}(q)$-linear operation that maps hyperplanes to hyperplanes, and in fact permutes all the  hyperplanes of ${\rm PG}(n-1,q)$ in a single cycle.

\begin{definition}
Let $G$ be a group of order $n$, written additively. A $k$-subset $D$ of $G$ forms an $(n,k,\lambda)$-Difference Set (DS) if the multiset \makebox{$\{x-y: x,y \in D, x \neq y\}$} comprises $\lambda$ copies of each non-identity element of $G$.
\end{definition}

We have the following well-known result of Singer \cite{Sin}:
\begin{theorem}[Singer]
Modulo the scalars, the elements of a hyperplane form a multiplicative $\left(\frac{q^n-1}{q-1}, \frac{q^{n-1}-1}{q-1}, \frac{q^{n-2}-1}{q-1}\right)$-difference set in the cyclic group of order $\frac{q^n-1}{q-1}$ induced by $\alpha$ on the equivalence classes. 
\end{theorem}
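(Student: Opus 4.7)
My plan is to convert the statement into a count of intersections of hyperplanes. Let $\alpha$ be a primitive element of ${\rm GF}(q^n)$, so the factor group $C := {\rm GF}(q^n)^*/{\rm GF}(q)^*$ is cyclic of order $v := (q^n-1)/(q-1)$, generated by the class $[\alpha]$. Fix an $(n-1)$-dimensional ${\rm GF}(q)$-subspace $H$ of ${\rm GF}(q^n)$ and let $D \subseteq C$ be the set of scalar classes admitting a representative in $H \setminus \{0\}$; then $|D| = (q^{n-1}-1)/(q-1) =: k$. I need to show that each $g \neq [1]$ in $C$ admits exactly $\lambda := (q^{n-2}-1)/(q-1)$ representations as $g = xy^{-1}$ with $x,y \in D$.

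The first step is to translate this count geometrically. If $g = [\beta]$, then multiplication by $\beta$ is a ${\rm GF}(q)$-linear automorphism of ${\rm GF}(q^n)$, so $\beta H$ is another $(n-1)$-dimensional subspace; and since ${\rm GF}(q)^* H = H$, the subspace $\beta H$ depends only on $g$, not on the chosen representative $\beta$. The number of ordered pairs $(x,y) \in D \times D$ with $xy^{-1} = g$ is then precisely the number of scalar classes contained in $H \cap \beta H$. If I can establish that $\beta H \neq H$ whenever $g \neq [1]$, then $H$ and $\beta H$ are distinct hyperplanes of ${\rm GF}(q^n)$, so $H \cap \beta H$ is an $(n-2)$-dimensional subspace, containing exactly $(q^{n-2}-1)/(q-1) = \lambda$ projective points, as required.

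The main obstacle is therefore the stabiliser claim
$$\{\beta \in {\rm GF}(q^n)^* : \beta H = H\} = {\rm GF}(q)^*.$$
The nontrivial inclusion follows from a short algebraic argument. Let $S$ denote the left-hand side together with $0$. Then $S$ is closed under addition: if $\beta_1, \beta_2 \in S$ and $\beta_1 + \beta_2 \neq 0$, then $(\beta_1 + \beta_2)H \subseteq \beta_1 H + \beta_2 H = H$, and since $(\beta_1 + \beta_2)H$ has dimension $n-1$ it must in fact equal $H$. Clearly $S$ is also closed under multiplication and contains $0$ and $1$, so $S$ is a subfield of ${\rm GF}(q^n)$ containing ${\rm GF}(q)$; hence $S = {\rm GF}(q^d)$ for some $d \mid n$. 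But then $H$ carries the structure of a ${\rm GF}(q^d)$-vector space, so $d$ divides its ${\rm GF}(q)$-dimension $n-1$. Combined with $d \mid n$ this forces $d = 1$, giving $S^* = {\rm GF}(q)^*$ as required. With this lemma in hand, the hyperplane intersection count delivers the Singer parameters directly.
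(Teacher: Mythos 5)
Your proof is correct. Note first that the paper does not prove this statement at all --- it is quoted as a classical result with a citation to Singer --- so there is no in-paper argument to compare against; what you have written is a complete, self-contained proof of the cited theorem. The two essential points are both handled properly: the reduction of the replication count of $g=[\beta]$ to a count of projective points in the intersection of $H$ with a multiplicative translate of $H$, and the stabiliser lemma $\{\beta \in {\rm GF}(q^n)^* : \beta H = H\} = {\rm GF}(q)^*$, which you prove by the standard trick of showing the stabiliser together with $0$ is additively closed (via the dimension argument $(\beta_1+\beta_2)H \subseteq \beta_1 H + \beta_2 H = H$), hence a subfield ${\rm GF}(q^d)$ with $d \mid n$, and then using that $H$ becomes a ${\rm GF}(q^d)$-vector space to force $d \mid n-1$ and so $d=1$. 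One trivial slip: the pairs $(x,y)=([\beta\eta],[\eta])$ with $xy^{-1}=g$ are indexed by the scalar classes of $H \cap \beta^{-1}H$ rather than $H \cap \beta H$; since $H \cap \beta^{-1}H = \beta^{-1}(\beta H \cap H)$ these have the same dimension, so the count of $(q^{n-2}-1)/(q-1)$ is unaffected. The $n=2$ edge case ($\lambda = 0$, intersection $=\{0\}$) is also consistent with your argument.
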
 

Multiplication by $\alpha$ induces a single cycle on the points, as well as on the hyperplanes.  Taking each power $i$ such that $\alpha^i$ is in this set yields a corresponding difference set in the additive group $\left(\mathbb{Z}_{\frac{q^n-1}{q-1}},+\right)$. This is known as a Singer difference set \cite{Handbook}. It is said to be in standard form if it contains $0$ and $1$.  If we construct \makebox{$\operatorname{Dev}(D)=\{i+D: 0 \leq i \leq \frac{q^n-1}{q-1}-1\}$} in $\left(\mathbb{Z}_{\frac{q^n-1}{q-1}},+\right)$, we obtain a collection of Singer difference sets with the same parameters \cite{Ber}.  
Addition of an integer modulo $(q^n-1)/(q-1)$ cyclically permutes these difference sets in a single cycle.  Further, $\operatorname{Dev}(D)$ forms a symmetric balanced incomplete block design (BIBD) with parameters  $\left(\frac{q^n-1}{q-1}, \frac{q^{n-1}-1}{q-1}, \frac{q^{n-2}-1}{q-1}\right)$.

Note that a different collection of equivalent difference sets may be obtained via a different choice of primitive element $\alpha$, but that all such difference sets are related (for any two such Singer DSs $D_1$ and $D_2$, there is some integer $t$ such that $tD_1$ is equivalent to $D_2$ \cite{Ber}).

The following straightforward process to calculate a Singer difference set from a given primitive polynomial is given in VI.18 of \cite{Handbook}. 

\begin{construction}\label{Handbook}
Let $f(x)=x^n+\sum_{i=1}^n a_i x^{n-i}$ be a primitive polynomial of degree $n$ over $ {\rm GF}(q)$.  Define the recurrence relation $\gamma_r=-\sum_{i=1}^n a_i \gamma_{r-i}$.  Take arbitrary initial values; using $\gamma_0=0,\ \gamma_1=0$ will yield a difference set in standard form.  Then the set of integers $\{0 \leq i \leq \frac{q^n-1}{q-1}: \gamma_i=0\}$ is a Singer difference set.
\end{construction}

Let $e=(q^n-1)/(q-1)$ and $f=q-1$; then $q^n=ef+1$. Since $ {\rm GF}(q)^*=C_0^e$, the elements of the factor ring $ {\rm GF}(q^n)^*/{\rm GF}(q)^*$ correspond to the set of $e$ projective points \begin{equation*}P=\{C_0^e, \alpha C_0^e, \alpha^2 C_0^e, \ldots, \alpha^{e-1}C_0^e\}=\{C_i^e: 0 \leq i \leq e-1\}.\end{equation*} A hyperplane of ${\rm PG}(n-1,q)$ consists of $(q^{n-1}-1)/(q-1)$ points, hence there are $(q^{n-1}-1)/(q-1)$ elements $\alpha^i C_0^e$ of $P$ that lie in a given hyperplane.  The set of values $i$ corresponding to the classes $C_i^e$ whose union forms the hyperplane yield the $\left(e, \frac{q^{n-1}-1}{q-1}, \frac{q^{n-2}-1}{q-1}\right)$ Singer difference set $I$ in $(\mathbb{Z}_e,+)$. 

The following structure will play an important role in what follows; for more details, see \cite{Handbook}.
\begin{definition}
A finite projective plane (FPP) is a finite set of points, a finite set of lines and an incidence relation between them, such that:
\begin{itemize}
\item[(i)] any two distinct points are incident with exactly one line
\item[(ii)] any two distinct lines are incident with exactly one point
\item[(iii)] there exists a quadrangle (four distinct points of which no three are collinear).
\end{itemize}
\end{definition}

When $n=3$, a Singer difference set $D$ has parameters $(q^2+q+1,q+1,1)$, and the development of a Singer difference set  forms a finite projective plane $\Pi_D$ of order $q$ whose points are the elements of $\mathbb{Z}_{q^2+q+1}$ and whose lines are given by \makebox{$\{i+D : 0 \leq i \leq q^2+q\}$}.

\section{Determining cyclotomic structure and cyclotomic numbers}

A major application of cyclotomic classes and cyclotomic numbers is in the construction of difference structures. This approach was introduced for difference families by Wilson \cite{Wil} and for difference sets by Storer \cite{Sto}; it has since been considerably extended.  We are therefore interested both in evaluation of the cyclotomic numbers and characterisation of the difference structure of cyclotomic classes; these can be fruitfully related.

For two subsets $A,B$ of a group $G$ written additively, we define the following multisets: $\Delta(A)=\{x-y: x \neq y \in A\}$ and $\Delta(A,B)=\{x-y: x \in A, y \in B\}$.  A useful link is the following result (see \cite{HucJoh}):
\begin{lemma}\label{DM}
\begin{itemize}
\item[(i)] $\Delta(C_0^e)=\cup_{s=1}^{f-1} (\alpha^{es}-1)C_0^e=\cup_{i=0}^{e-1} (i,0)_e C_i^e$
\item[(ii)] For $0<j<e$, $\Delta(C_j^e) = \cup_{i=0}^{e-1}(i,0)_e C_{i+j}^e$
\item[(iii)] For $0<j<e$, $\Delta(C_j^e, C_0^e)=\cup_{s=0}^{f-1} (\alpha^{es+j}-1) C_0^e = \cup_{i=0}^{e-1}(i,j)_e C_i^e$
\item[(iv)] For $0<j, \ell <e$, $\Delta(C_{j+\ell}^e, C_{\ell}^e)= \cup_{i=0}^{e-1}(i,j)_e C_{i+\ell}^e$
\end{itemize}
\end{lemma}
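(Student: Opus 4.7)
The plan is to prove part (i) by a multiset bijection argument, and then reduce (ii), (iii), (iv) to it via multiplicative shifts by powers of $\alpha$.

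For (i), the key observation is that $C_0^e = \{\alpha^{es} : 0 \leq s \leq f-1\}$ is a cyclic group under multiplication. For any distinct $x, y \in C_0^e$ there is a unique $s \in \{1, \ldots, f-1\}$ with $x = y\alpha^{es}$, giving $x - y = y(\alpha^{es} - 1)$. The map $(x,y) \mapsto (y,s)$ is a bijection from the ordered pairs in $C_0^e$ with $x \neq y$ onto $C_0^e \times \{1, \ldots, f-1\}$, yielding the first multiset equality $\Delta(C_0^e) = \bigcup_{s=1}^{f-1}(\alpha^{es}-1)C_0^e$. For the second equality, each nonzero element $\alpha^{es}-1$ lies in a unique class $C_{i(s)}^e$, and then $(\alpha^{es}-1)C_0^e = C_{i(s)}^e$ as sets. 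The number of $s \in \{1, \ldots, f-1\}$ with $i(s) = i$ equals the number of ordered pairs $(s, u)$ satisfying $\alpha^{eu+i} + 1 = \alpha^{es}$, which is exactly $(i,0)_e$ by definition; hence $\Delta(C_0^e) = \sum_i (i,0)_e C_i^e$.

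Part (iii) is proved analogously. For $x \in C_j^e$ and $y \in C_0^e$, write $x = y\alpha^{es+j}$ with $s \in \{0, \ldots, f-1\}$; since $j \not\equiv 0 \pmod e$, $\alpha^{es+j} \neq 1$, so no value of $s$ needs to be excluded. The analogous count shows the number of $s$ with $\alpha^{es+j} - 1 \in C_i^e$ is $(i,j)_e$, giving both equalities. Parts (ii) and (iv) then follow by applying the bijection $z \mapsto \alpha^r z$ with $r = j$ and $r = \ell$ respectively; this map permutes $\mathrm{GF}(q^n)^*$, sends $C_m^e$ to $C_{m+r}^e$, and commutes with additive differences, so $\Delta(\alpha^r A, \alpha^r B) = \alpha^r \Delta(A, B)$.

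There is no genuine obstacle; the argument is essentially a careful bijection. The only place to be cautious is the multiset multiplicities: each $(\alpha^{es}-1)C_0^e$ contributes $f$ elements, so the totals are $f(f-1)$ for (i) and $f^2$ for (iii), which must match $\sum_i (i,0)_e \cdot f$ and $\sum_i (i,j)_e \cdot f$ respectively. This consistency follows from Lemma~\ref{Storer}(v) and serves as a sanity check confirming that the bijections track multiplicities correctly.
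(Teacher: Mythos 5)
Your proof is correct. The paper does not prove Lemma~\ref{DM} itself (it is quoted from \cite{HucJoh}), but your argument --- factoring each difference as $x-y=y(\alpha^{es+j}-1)$, identifying $(\alpha^{es+j}-1)C_0^e$ with the coset $C_i^e$ containing $\alpha^{es+j}-1$, counting the relevant $s$ via the definition of $(i,j)_e$, and transporting to (ii) and (iv) by multiplication by a power of $\alpha$ --- is exactly the standard derivation these identities rest on, and the multiplicity bookkeeping (including the consistency check against Lemma~\ref{Storer}(v)) is handled correctly.
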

These are generally used to determine which classes appear in the multiset of differences.  Instead we will use difference set structure to determine the cyclotomic numbers.

\subsection{Cyclotomic numbers of order $e$ where $e=(q^n-1)/(q-1)$}

We first consider the fundamental case on which we will build our subsequent results.

Let $e=(q^n-1)/(q-1)$. We observe that for each $0 \leq k \leq e-1$, all elements that occur in $\Delta(C_k^e,C_0^e)=\{ \alpha^{es+k}-\alpha^{et}: 0 \leq s,t \leq f-1 \}$ are contained in the subspace $\operatorname{Span}(1,\alpha^k)$ of the vector space $ {\rm GF}(q^n)$.  In ${\rm PG}(n-1,q)$ this corresponds to the unique line through the points $1$ and $\alpha^k$.  Since  $\operatorname{Span}(1,\alpha^k)$  is a two-dimensional vector space over $ {\rm GF}(q)$, it has $q^2-1$ non-zero elements and upon factoring-out by $ {\rm GF}(q)^*$ we obtain a set comprising the $q+1$ points of the line.

\begin{definition}\label{S_k}
Consider $ {\rm GF}(q^n)$ as a vector space of dimension $n$ at least $2$ over $ {\rm GF}(q)$, let $e=(q^n-1)/(q-1)$ and let $\alpha$ be a primitive element of $ {\rm GF}(q^n)$.  For $1 \leq k \leq e-1$, define
$$S_k=\{j: \alpha^j \in \operatorname{Span}(1,\alpha^k), 0 \leq j \leq e-1\} \subseteq (\mathbb{Z}_e,+).$$
\end{definition}

Note that $0 \in S_k$ for all $1 \leq k \leq e-1$. In the $n=2$ case, $\operatorname{Span}(1,\alpha^k)={\rm GF}(q^2)$ for any $1 \leq k \leq e-1$, so $S_k=\mathbb{Z}_{q+1}$. In the $n=3$ case, we observe that ${\rm PG}(2,q)$ is a projective plane, so the line $S_k$ is in fact a hyperplane of that plane and so $S_k$ is a Singer difference set.  For $n\geq 4$, the line $S_k$ lies in any hyperplane that contains the points $0$ and $k$, and hence will be properly contained in the corresponding Singer difference sets.  In Section~\ref{sec:generaln} we will discuss how $S_k$ may be obtained from these by taking appropriate intersections.

We now present a structural theorem which will be key in what follows.

\begin{theorem}\label{thm:main}
Consider the finite field $ {\rm GF}(q^n)$ and let $e=(q^n-1)/(q-1)$.  View $ {\rm GF}(q^n)$ as the vector space $ {\rm GF}(q)^n$ over $ {\rm GF}(q)$. Let $\alpha$ be a primitive element of $ {\rm GF}(q^n)$.
Then:
\begin{itemize}
\item[(i)] $\Delta(C_k^e, C_0^e)$ contains only classes $C_j^e$ where $j \in S_k$.
\item[(ii)] $C_k^e$ and $C_0^e$ do not occur in $\Delta(C_k^e, C_0^e)$.
\item[(iii)] The other classes $C_j^e$ with $j \in S_k$, $j \not\in \{0,k\}$, occur precisely once each in $\Delta(C_k^e, C_0^e)$, i.e.
$$\Delta(C_k^e,C_0^e)=\cup_{j \in S_k \setminus \{0,k\}} C_j^e.$$
\item[(iv)] For $0 \leq i \neq j \leq e-1$, 
$$\Delta(C_i^e,C_j^e)=\cup_{h \in S_{i,j} \setminus \{i,j\}} C_h^e$$
where $S_{i,j}=\{h: \alpha^h \in \operatorname{Span}(\alpha^i,\alpha^j), 0 \leq h \leq e-1\}=j+S_{i-j}$.
\end{itemize}
\end{theorem}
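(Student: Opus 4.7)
The plan is to use the linear-algebraic structure of the two-dimensional subspace $\operatorname{Span}(1,\alpha^k)$ together with a short injectivity argument; parts (i), (ii), (iv) are then essentially bookkeeping, and (iii) is the only step requiring genuine content. The key underlying fact is that $\operatorname{Span}(1,\alpha^k)$ is a $2$-dimensional ${\rm GF}(q)$-subspace of $ {\rm GF}(q^n)$, so its $q^2-1$ nonzero elements split into $(q^2-1)/(q-1) = q+1$ cyclotomic classes $C_j^e$ (each of size $f = q-1$, indexed exactly by the elements of $S_k$); in particular $|S_k|=q+1$.

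For (i), a typical difference $\alpha^{es+k}-\alpha^{et} = \alpha^{es}\cdot\alpha^k+(-\alpha^{et})\cdot 1$ is visibly a ${\rm GF}(q)$-linear combination of $1$ and $\alpha^k$, and it is nonzero since $C_k^e\cap C_0^e = \emptyset$ for $1\le k\le e-1$, so it lies in some $C_j^e$ with $j\in S_k$. For (ii), if the difference lay in $C_0^e$ then $\alpha^{es+k}$ would be the sum of two elements of ${\rm GF}(q)$, forcing $\alpha^k\in{\rm GF}(q)$ and contradicting $1\le k\le e-1$; the analogous calculation, isolating $\alpha^{et}$ on the right, excludes $C_k^e$.

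For (iii), the heart of the argument is that the difference map $(s,t)\mapsto\alpha^{es+k}-\alpha^{et}$ is injective on $\{0,\ldots,f-1\}^2$. Indeed, if $(s,t)$ and $(s',t')$ produce the same difference, then rearrangement yields $\alpha^k(\alpha^{es}-\alpha^{es'}) = \alpha^{et}-\alpha^{et'}$; the right side lies in ${\rm GF}(q)$, so a nonzero coefficient on the left would give $\alpha^k\in{\rm GF}(q)$, again impossible. Hence $\alpha^{es}=\alpha^{es'}$, forcing $s=s'$ and then $t=t'$. This produces $f^2$ distinct nonzero differences, all of which by (i) and (ii) lie in the $|S_k\setminus\{0,k\}| = q-1=f$ classes $C_j^e$ with $j \in S_k\setminus\{0,k\}$. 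Since each such class has exactly $f$ elements, each must appear precisely once.

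For (iv), multiplication by $\alpha^{-j}$ gives $\Delta(C_i^e,C_j^e) = \alpha^j\Delta(C_{i-j}^e,C_0^e)$, so (iii) applied with $k=i-j$ yields the classes indexed by $j+(S_{i-j}\setminus\{0,i-j\}) = S_{i,j}\setminus\{j,i\}$, matching the definition $S_{i,j} = j+S_{i-j}$. The only nontrivial step is the injectivity claim in (iii); once that is in hand, everything else reduces to the dimension count $|S_k|=q+1$ and a routine change of variables.
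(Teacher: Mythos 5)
Your proof is correct and follows essentially the same route as the paper's: both rest on the observation that $\operatorname{Span}(1,\alpha^k)$ is a $2$-dimensional $\mathrm{GF}(q)$-subspace meeting $q+1$ classes, that $C_0^e\cup\{0\}=\mathrm{GF}(q)$ is a subfield with $\alpha^k\notin\mathrm{GF}(q)$, and then a cardinality count. The only cosmetic difference is in part (iii), where you prove injectivity of the element-level difference map $(s,t)\mapsto\alpha^{es+k}-\alpha^{et}$ and count $f^2$ distinct differences against $f^2$ available slots, whereas the paper argues at the level of classes (two distinct $x,y\in C_k^e$ cannot have $x-1,y-1$ in the same class $C_j^e$, $j\neq k$) before counting; both are valid and of comparable length.
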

\begin{proof}
(i) By Lemma \ref{DM}, $\Delta(C_k^e,C_0^e)$ is the multiset union of the $f$ cyclotomic classes $(\alpha^{es+k}-1)C_0^e$ ($0 \leq s \leq f-1$). Since $C_0^e=\langle \alpha^e \rangle= {\rm GF}(q)^*$, the cyclotomic class $C_k^e=\{\alpha^{es+k}: 0 \leq s \leq f-1\}=\{ \alpha^{es} \alpha^k: 0 \leq s \leq f-1\}=\alpha^k  {\rm GF}(q)^*$.  So all elements $\alpha^{es+k}-1$ ($0 \leq s \leq f-1$) are of the form $a \alpha^k+b$ for some $a,b \in  {\rm GF}(q)^*$, i.e. lie in $\operatorname{Span}(1,\alpha^k)$.  The same holds for all elements of $(\alpha^{es+k}-1)C_0^e$, hence all classes in $\Delta(C_k^e,C_0^e)$ are from the set of classes $C_j^e$ where $j \in S_k$.\\
(ii) Let $x \in C_k^e$ where $k \neq 0$.  If $x-1=y \in C_k^e$ then $x-y=1$.   Since $ \Delta(C_k^e)=\alpha^k \Delta(C_0^e)$ contains only elements of $C_k^e$, this implies $1= x-y \in \Delta(C_k^e)$, a contradiction.  If $x-1=a \in C_0^e$ then $x=a+1 \in C_0^e$, a contradiction.\\
(iii) Let $x \neq y \in C_k^e$ and suppose $x-1, y -1 \in C_j^e$ for some $1 \leq j \neq k \leq e-1$.  Then $(x-1)-(y-1)=x-y \in C_j^e$; however $x-y \in C_k^e$, a contradiction since $j \neq k$.  Hence no class occurs more than once.  Observe that $\operatorname{Span}(1, \alpha^k)$ has $q^2-1$ nonzero elements; factoring out by $ {\rm GF}(q)^*$ there are $q+1$ cyclotomic classes forming the subspace, i.e. $|S_k|=q+1$.  From above, $C_0^e$ and $C_k^e$ do not occur in $\Delta(C_k^e,C_0^e)$; therefore the $q-1=f$ remaining classes must occur without repetition.\\
(iv) This follows from (iii) since $\Delta(C_i^e,C_j^e)=\alpha^j \Delta(C_{i-j}, C_0)$.
\end{proof}

This yields the following characterization of the cyclotomic numbers in this setting:
\begin{theorem}\label{cor:main}
Consider the finite field $ {\rm GF}(q^n)$ and let $e=(q^n-1)/(q-1)$. Let $\alpha$ be a primitive element of $ {\rm GF}(q^n)$. Then the cyclotomic numbers of order $e$ are as follows:
\begin{itemize}
\item[(i)] $(0,0)_e=f-1$
\item[(ii)] $(0,i)_e=(i,0)_e=(i,i)_e=0$ for all $1 \leq i \leq e-1$
\item[(iii)] For $1 \leq j \leq e-1$ and $i \not\in \{0,j\}$, $(i,j)_e$ equals $1$ if $i \in S_j \setminus \{0,j\}$ and $0$ otherwise
\item[(iv)] For $1 \leq i,j \leq e-1$ with $i \neq j$, 
$$(i,j)_e =\begin{cases}
    1, \mbox{ if $\{i,j\}$ occur together in some $S_h$ ($0<h<e$)}\\
    0, \mbox{ otherwise}
\end{cases}$$
\end{itemize}
\end{theorem}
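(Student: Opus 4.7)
The plan is to read off the cyclotomic numbers directly from Theorem \ref{thm:main} together with the multiset identities of Lemma \ref{DM}, splitting the argument according to whether the second index is zero or not.

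For $j=0$ (which yields part (i) and the ``$(i,0)_e=0$'' portion of (ii)), I would invoke Lemma \ref{DM}(i): $\Delta(C_0^e)=\sum_{i=0}^{e-1}(i,0)_e C_i^e$. Since $C_0^e={\rm GF}(q)^*$ lies inside the subfield ${\rm GF}(q)$, every difference $\alpha^{es}-\alpha^{et}$ with $s\neq t$ again lies in ${\rm GF}(q)^*=C_0^e$, so only $C_0^e$ can appear in the multiset. A cardinality count ($|\Delta(C_0^e)|=f(f-1)$ while $|C_0^e|=f$) then forces $(0,0)_e=f-1$ and $(i,0)_e=0$ for $1\leq i\leq e-1$.

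For $j\neq 0$ (parts (ii) and (iii)), Lemma \ref{DM}(iii) gives $\Delta(C_j^e,C_0^e)=\sum_{i=0}^{e-1}(i,j)_e C_i^e$, while Theorem \ref{thm:main}(ii)--(iii) describes the same multiset as $\bigcup_{h\in S_j\setminus\{0,j\}}C_h^e$, each class appearing exactly once. Matching the multiplicity of $C_i^e$ on both sides immediately yields $(0,j)_e=(j,j)_e=0$ and $(i,j)_e=1$ precisely when $i\in S_j\setminus\{0,j\}$, simultaneously establishing the remaining cases of (ii) and all of (iii).

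For (iv), with $1\leq i,j\leq e-1$ and $i\neq j$, part (iii) already gives $(i,j)_e=1$ iff $i\in S_j$ (since $i\notin\{0,j\}$), so it suffices to show this is equivalent to $\{i,j\}\subset S_h$ for some $0<h<e$. Geometrically, $i\in S_j$ asserts that $[1],[\alpha^i],[\alpha^j]$ are collinear in $\operatorname{PG}(n-1,q)$; the forward direction then holds by taking $h=j$ (both $i,j\in S_j$), and the converse holds because the unique line through $[1]$ and $[\alpha^h]$ containing $[\alpha^i],[\alpha^j]$ must coincide with the line $S_j$, so $i\in S_j$.

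This is essentially a translation exercise; the substantive content lives in Theorem \ref{thm:main}. The only point requiring care is that Lemma \ref{DM}(iii) excludes $j=0$, so $(0,0)_e$ and the $(i,0)_e$ values must be handled separately via the Lemma \ref{DM}(i) argument above—I would signal this case split clearly in the writeup.
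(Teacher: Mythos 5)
Your proposal is correct and follows essentially the same route as the paper: both read the cyclotomic numbers off from Theorem~\ref{thm:main} via the multiset identities of Lemma~\ref{DM}, with the $j=0$ column handled separately through the subfield property of $C_0^e$. The only cosmetic differences are that the paper deduces part (ii) from the counting identity $\sum_i (i,0)_e=f-1$ and the symmetry relations of Lemma~\ref{Storer} rather than directly from Theorem~\ref{thm:main}(ii), and for part (iv) the paper uses an ordered-pair count over each $S_h$ where you invoke the (equally valid, and arguably cleaner) uniqueness of the line through $[1]$ and $[\alpha^j]$.
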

\begin{proof}
Part (i) holds since $C_0^e \cup \{0\}$ is a subfield.  Part(ii) holds by Lemma \ref{Storer} since $\sum_{i=0}^{e-1} (i,0)_e=f-1$ and $(i,0)_e=(0,i)_e=(e-i,e-i)_e$.  For (iii), it is known that $\Delta(C_j^e,C_0^e)=\cup_{i=0}^{e-1} (i,j)_e C_i^e$. Theorem \ref{thm:main} shows that for $0 <j <e$, each $C_i^e$ with $i \in S_j \setminus \{0,j\}$ occurs precisely once in $\Delta(C_j^e,C_0^e)$, and no other classes occur.  For (iv), let $h \in \{1,\ldots, e-1\}$.   For any nonzero $j \in S_h$, we obtain precisely $q-1$ nonzero cyclotomic numbers $(i,j)_e=1$, where each $i \in S_h \setminus \{0,j\}$. Running through each of the $q$ nonzero values of $j \in S_h$, we obtain $q(q-1)$ cyclotomic numbers, corresponding to all possible ordered pairs of distinct non-zero elements from $S_h$.
\end{proof}  

When $n=2$, we immediately recover the uniform cyclotomy situation.
\begin{corollary}
Consider the finite field $ {\rm GF}(q^2)$; let $e=(q^2-1)/(q-1)=q+1$ and $f=q-1$. Let $\alpha$ be a primitive element of $ {\rm GF}(q^2)$.
Then for $1 \leq k \leq e-1$, 
$$\Delta(C_k^e,C_0^e)=\bigcup_{j \in \mathbb{Z}_{q+1} \setminus \{0,k\}} C_j^e$$. 
The cyclotomic numbers are as follows:
\begin{itemize}
\item[(i)] $(0,0)_e=f-1$
\item[(ii)] $(0,i)_e=(i,0)_e=(i,i)_e=0$ for all $1 \leq i \leq e-1$
\item[(iii)] For $1 \leq i,j \leq e-1$ with $i \neq j$, $(i,j)_e=1$.
\end{itemize}
\end{corollary}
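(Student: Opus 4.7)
The plan is to observe that this corollary is essentially an unpacking of Theorem~\ref{thm:main} and Theorem~\ref{cor:main} in the special case $n=2$, with the single key simplification being that each set $S_k$ becomes all of $\mathbb{Z}_{q+1}$. Accordingly, almost no new work is required; the task is to verify this one simplification and then cite the general theorems.

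First I would fix $1 \leq k \leq e-1$ and argue that $\operatorname{Span}(1,\alpha^k) = \mathrm{GF}(q^2)$. Since $\alpha$ is a primitive element of $\mathrm{GF}(q^2)$, its order is $q^2-1 = (q+1)(q-1) = ef$. The element $\alpha^k$ belongs to $\mathrm{GF}(q)^* = C_0^e$ if and only if $e \mid k$, which fails for $1 \leq k \leq e-1$. Hence $\alpha^k \notin \mathrm{GF}(q) = \operatorname{Span}(1)$, so $\{1,\alpha^k\}$ is linearly independent over $\mathrm{GF}(q)$ and therefore a $\mathrm{GF}(q)$-basis of the two-dimensional space $\mathrm{GF}(q^2)$. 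Consequently every nonzero element of $\mathrm{GF}(q^2)$ lies in $\operatorname{Span}(1,\alpha^k)$, which gives $S_k = \{0,1,\ldots,e-1\} = \mathbb{Z}_{q+1}$.

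Next I would apply Theorem~\ref{thm:main}(iii) with this value of $S_k$ to obtain
\[
\Delta(C_k^e, C_0^e) = \bigcup_{j \in S_k \setminus \{0,k\}} C_j^e = \bigcup_{j \in \mathbb{Z}_{q+1} \setminus \{0,k\}} C_j^e,
\]
which is the first assertion. Parts~(i) and~(ii) of the cyclotomic-number statement are then immediate from the corresponding items of Theorem~\ref{cor:main}. For part~(iii), let $1 \leq i, j \leq e-1$ with $i \neq j$; since $S_j = \mathbb{Z}_{q+1}$, we automatically have $i \in S_j \setminus \{0,j\}$, so Theorem~\ref{cor:main}(iii) gives $(i,j)_e = 1$.

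Because the heavy lifting has already been done in Theorems~\ref{thm:main} and~\ref{cor:main}, there is no real obstacle here; the only point requiring any thought is the dimension argument that forces $S_k$ to be the whole of $\mathbb{Z}_{q+1}$, and even that is just the observation that two nonzero vectors in a two-dimensional space are linearly dependent precisely when they are scalar multiples of each other.
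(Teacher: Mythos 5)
Your proposal is correct and follows essentially the same route as the paper: both reduce the corollary to the general results by observing that $\operatorname{Span}(1,\alpha^k)$ is two-dimensional, hence all of ${\rm GF}(q^2)$, so $S_k=\mathbb{Z}_{q+1}$. You merely spell out the linear-independence step (that $\alpha^k\notin{\rm GF}(q)$ for $1\leq k\leq e-1$) which the paper leaves implicit.
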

\begin{proof}
Here, for any $1 \leq k \leq e-1$, $\operatorname{Span}(1,\alpha^k)$ is a subspace of dimension $2$, hence is the whole of $ {\rm GF}(q^2)$, and hence $S_k=\mathbb{Z}_{q+1}=\{0,1,\ldots,q\}=\{0,1,\ldots,e-1\}$.  Every pair of distinct nonzero elements $\{i,j\}$ with $1 \leq i \neq j \leq e-1$ occurs together in this set.
\end{proof}

Observe that, for $n>2$, the property that all $(i,j)_e$ are equal for $1 \leq i \neq j \leq e-1$ no longer holds, although there are now just two possible values ($0$ or $1$) for such $(i,j)_e$.

\subsection{Cyclotomic numbers of order $e=(q^n-1)/(q-1)$ where $n=3$ and $n=4$}
When $n=3$ and $n=4$,  the set $S_k$ ($1 \leq k \leq e-1$) has a particularly nice form.

\begin{theorem}\label{n=3theorem}
Consider the finite field $ {\rm GF}(q^3)$; let $e=(q^3-1)/(q-1)$ and $f=q-1$. Let $\alpha$ be a primitive element of $ {\rm GF}(q^3)$ and let $I$ be a corresponding Singer difference set in $\mathbb{Z}_e$.
Then for $1 \leq k \leq e-1$, 
$$\Delta(C_k^e,C_0^e)=\bigcup_{j \in S_k \setminus \{0,k\}} C_j^e$$ where $S_k$ is the unique Singer difference set in the finite projective plane $\pi_I=Dev(I)$ containing $0$ and $k$.
\end{theorem}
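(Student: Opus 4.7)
The equality $\Delta(C_k^e,C_0^e)=\bigcup_{j \in S_k \setminus \{0,k\}} C_j^e$ is immediate from Theorem~\ref{thm:main}(iii), which holds for every $n \geq 2$. So the substantive task is to identify $S_k$ with the unique Singer difference set in $\pi_I = \operatorname{Dev}(I)$ that contains both $0$ and $k$.

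First, I would argue that $S_k$ is a line of ${\rm PG}(2,q)$. By Definition~\ref{S_k}, $S_k$ consists of all residues $j \in \mathbb{Z}_e$ such that $\alpha^j$ lies in the two-dimensional ${\rm GF}(q)$-subspace $\operatorname{Span}(1,\alpha^k)$ of ${\rm GF}(q^3)$. Quotienting by the scalar subgroup ${\rm GF}(q)^*$, this subspace yields exactly $q+1$ projective points, i.e.\ a hyperplane of ${\rm PG}(2,q)$; since ${\rm PG}(2,q)$ is a projective plane of order $q$, its hyperplanes are precisely its lines. Moreover $S_k$ visibly contains $0$ and $k$, since $\alpha^0=1$ and $\alpha^k$ both lie in $\operatorname{Span}(1,\alpha^k)$.

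Next, I would invoke the Singer construction recalled in Section~\ref{sec:background}. Multiplication by $\alpha$ acts as a single cycle on the hyperplanes of ${\rm PG}(2,q)$, and the Singer difference set $I \subset \mathbb{Z}_e$ obtained from $\alpha$ is constructed so that the hyperplanes of ${\rm PG}(2,q)$ correspond precisely to the translates $i+I$; in particular every line of ${\rm PG}(2,q)$ appears as a block of $\pi_I = \operatorname{Dev}(I)$. Therefore $S_k$ is a Singer difference set belonging to $\pi_I$. Since $\pi_I$ is a projective plane, any two distinct points lie on a unique line, and so $S_k$ is the unique block of $\pi_I$ that contains both $0$ and $k$.

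The only genuinely delicate point is verifying that the integer set $S_k$ defined via representatives of cyclotomic classes in $\mathbb{Z}_e$ really coincides with a translate of $I$, rather than just with an abstract line of ${\rm PG}(2,q)$. This rests on the compatibility of the Singer cycle (multiplication by $\alpha$ in ${\rm GF}(q^3)^*$) with the cyclic indexing of the projective-point classes $C_i^e$ by $\mathbb{Z}_e$, which is exactly what the Singer theorem and the construction of $I$ guarantee. Once that identification is in place, no further calculation is needed.
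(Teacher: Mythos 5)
Your proposal is correct and follows essentially the same route as the paper's proof: reduce to identifying $S_k$, observe that $\operatorname{Span}(1,\alpha^k)$ is a two-dimensional subspace and hence a hyperplane (line) of ${\rm PG}(2,q)$ whose index set is a block of $\operatorname{Dev}(I)$ via the Singer correspondence, and conclude by uniqueness of the line through the two points $0$ and $k$. The extra care you take in checking that $S_k$ is genuinely a translate of $I$ in $\mathbb{Z}_e$ (via the single-cycle action of multiplication by $\alpha$ on hyperplanes) is a slightly more explicit rendering of what the paper states tersely, not a different argument.
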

\begin{proof} Only the claim about $S_k$ requires justification.  Since $ {\rm GF}(q^3)$  is a vector space of dimension $3$ over $ {\rm GF}(q)$, the two-dimensional subspace $\operatorname{Span}(1,\alpha^k)$ has codimension $1$ and hence is a hyperplane.  Its corresponding set of powers $S_k$ is a Singer difference set by construction, and its development is a finite projective plane. Since $S_k$ is the line of the FPP containing the pair $0$ and $k$, it is uniquely determined.
\end{proof}

\begin{corollary}
Consider the finite field $ {\rm GF}(q^3)$. Let $e=(q^3-1)/(q-1)$ and $f=q-1$. Let $\alpha$ be a primitive element of $ {\rm GF}(q^3)$; let $I$ be a corresponding Singer difference set in $\mathbb{Z}_e$ and let $S_j$ be the unique line in the FPP $\operatorname{Dev}(I)=\pi_I$ containing $\{0,j\}$.  Then
\begin{itemize}
\item[(i)] $(0,0)_e=f-1$
\item[(ii)] $(0,i)_e=(i,0)_e=(i,i)_e=0$ for all $1 \leq i \leq e-1$
\item[(iii)] For $1 \leq j \leq e-1$ and $i \not\in \{0, j\}$, $(i,j)_e$ equals $1$ if $i \in S_j \setminus \{0,j\}$, and equals $0$ otherwise
\item[(iv)] For $1 \leq i,j \leq e-1$ with $i \neq j$, 
$$(i,j)_e = \begin{cases}
    1, \mbox{ if $\{i,j\}$ occur together in some $S_k$ ($0<k<e$)}\\
    0, \mbox{ otherwise}
\end{cases}$$
\end{itemize}
\end{corollary}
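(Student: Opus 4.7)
My plan is to derive this corollary as a direct specialization of Theorem \ref{cor:main} to the case $n=3$, using Theorem \ref{n=3theorem} to translate the abstract sets $S_j$ into the concrete geometric language of lines of the projective plane $\pi_I = \operatorname{Dev}(I)$. Essentially nothing new needs to be proved; the point of the statement is to record the geometric form of the result.

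Parts (i) and (ii) can be quoted verbatim from parts (i) and (ii) of Theorem \ref{cor:main}, since those two assertions hold in the general $n \geq 2$ setting and involve no dependence on the sets $S_k$. For part (iii), I would apply part (iii) of Theorem \ref{cor:main}, which already says $(i,j)_e$ equals $1$ if $i \in S_j\setminus\{0,j\}$ and $0$ otherwise, and then invoke Theorem \ref{n=3theorem} to identify $S_j$ with the unique line of $\pi_I$ through the points $0$ and $j$.

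For part (iv), I would apply part (iv) of Theorem \ref{cor:main} and observe that, under the identification supplied by Theorem \ref{n=3theorem}, the collection $\{S_k : 1 \leq k \leq e-1\}$ is exactly the set of lines of $\pi_I$ incident with the point $0$ (each such line being repeated once for every nonzero point it contains). Hence the condition "$\{i,j\}$ occur together in some $S_k$ with $0<k<e$" is just another way of saying that $0$, $i$ and $j$ are collinear in $\pi_I$, which is the content of (iv).

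There is no substantive obstacle here: this is a mechanical specialization of results already established, and the only thing worth double-checking is the consistency between the indexing "$S_k$ is the line through $\{0,k\}$" and the statement "$\{i,j\}$ occur together in some $S_k$." The value of the corollary lies not in its proof but in its interpretation — for $e = (q^3-1)/(q-1)$, the entire cyclotomic number table is determined by the collinearity structure of a single finite projective plane attached to a choice of primitive element.
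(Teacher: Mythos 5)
Your proposal is correct and matches the paper's intended argument: the corollary is stated without proof precisely because it is the mechanical specialization of Theorem \ref{cor:main} to $n=3$, with Theorem \ref{n=3theorem} supplying the identification of $S_j$ as the unique line of $\pi_I$ through $0$ and $j$. Your observation for part (iv) --- that the $S_k$ are exactly the lines through $0$, so the condition is collinearity of $0$, $i$, $j$ --- is a correct and slightly more explicit gloss than the paper gives.
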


In Section~\ref{sec:background}, a straightforward algorithm was given for producing the Singer difference set via a recurrence relation using modular arithmetic, starting from a given primitive polynomial. For any $j$ with $1\leq j\leq q-1$ there is a single pair of elements $x$ and $y$ in the difference set with $y-x=j \mod e$.  Translating each element of the difference set by $-x$ results in a difference set containing $0$ and $j$, i.e. this translated difference set is none other than $S_j$.  This can be used to obtain the corresponding cyclotomic numbers $(i,j)_e$ without any use of character sums or direct computation in the extension field.

\begin{example}
Consider ${\rm GF}(27)$; we will obtain the cyclotomic numbers of order $e=13$.  For the general case of cyclotomic numbers of order $13$, we are aware only of the result of \cite{Zee} which performs some analysis of the corresponding Jacobi sums via a quaternary quadratic decomposition, and no direct method for evaluation. 

Let $q=3=n$ and let $e=13$ and $f=2$.  Take the primitive polynomial $x^3+2x^2+1$.  The recurrence relation is $\gamma_r=\gamma_{r-1}+2\gamma_{r-3}$, which yields the sequence $0011102112101 \ldots$. Then $I=\{0,1,5,11\}$ is the corresponding Singer difference set in standard form, and the other elements of $\operatorname{Dev}(I)$ containing $0$ are the translates of $I$ by its negatives, i.e. $-1+I=\{12,0,4,10\},-5+I=\{8,9,0,6\}$ and $-11+I=\{2,3,7,0\}$.   From above,
$$\Delta(C_1^{13},C_0^{13})=C_5^{13} \cup C_{11}^{13}, \Delta(C_5^{13},C_0^{13})= C_1^{13} \cup C_{11}^{13}, 
\Delta(C_{11}^{13},C_0^{13}) = C_1^{13} \cup C_5^{13},$$
$$\Delta(C_2^{13},C_0^{13}) = C_3^{13} \cup C_{7}^{13},
\Delta(C_3^{13},C_0^{13})= C_2^{13} \cup C_{7}^{13},\Delta(C_7^{13},C_0^{13}) = C_2^{13} \cup C_3^{13},$$
$$\Delta(C_4^{13},C_0^{13}) = C_{10}^{13} \cup C_{12}^{13},
\Delta(C_{10}^{13},C_0^{13}) = C_4^{13} \cup C_{12}^{13},
\Delta(C_{12}^{13},C_0^{13}) = C_4^{13} \cup C_{10}^{13},$$
$$
\Delta(C_6^{13},C_0^{13}) = C_8^{13} \cup C_{9}^{13},
\Delta(C_8^{13},C_0^{13}) = C_6^{13} \cup C_{9}^{13},
\Delta(C_{9}^{13},C_0^{13}) = C_6^{13} \cup C_8^{13}.$$  
Hence $(0,0)_{13}=1$, $(5,1)_{13}=(11,1)_{13}=(3,2)_{13}=(7,2)_{13}=(2,3)_{13}=(7,3)_{13}=(10,4)_{13}=(12,4)_{13}=(1,5)_{13}=(11,5)_{13}=(8,6)_{13}=(9,6)_{13}=(2,7)_{13}=(3,7)_{13}=(6,8)_{13}=(9,8)_{13}=(6,9)_{13}=(8,9)_{13}=(4,10)_{13}=(12,10)_{13}=(1,11)_{13}=(5,11)_{13}=(4,12)_{13}=(10,12)_{13}=1$ and all other $(i,j)_{13}=0$.
\end{example}

\begin{example}
Consider ${\rm GF}(64)= {\rm GF}(4^3)$; we will obtain the cyclotomic numbers of order $21$. We are aware of no direct method for $e=21$ in general.  Here \makebox{$e=(4^3-1)/(4-1)=21$,} we have $f=3$ and the Singer difference set has size $(4^2-1)/(4-1)=5$. Take primitive polynomial $x^3 + x^2 + x + \gamma^2\in{\rm GF}(4)[x]$, where $\gamma$ is a primitive element of $ {\rm GF}(4)$ (i.e. $\gamma^2=\gamma+1$).  The sequence from the recurrence relation is 
$$ 0, 0, 1, 1, 0, \gamma, 1, \gamma^2, \gamma^2, \gamma^2, \gamma, \gamma^2, \gamma^2, 1, 0, \gamma^2, 0, \gamma^2, 1, \gamma, 1 $$
so the Singer difference set is $\{ 0, 1, 4, 14, 16\}$.  The other $0$-containing translates of $I$ are $-1+I=\{0,3,13,15,20\}, -4+I=\{0,10,12,17,18\}, -14+I=\{0,2,7,8,11\}$ and $-16+I=\{0,5,6,9,19\}$.  Here $(0,0)_{21}=f-1=2$ and all $(0,i)_{21}=(i,0)_{21}=(i,i)_{21}$ for $0<i<21$.  Using $S_1=\{0,1,4,14,16\}$, we have that $\Delta(C_1^{21}, C_0^{21})=C_4^{21} \cup C_{14}^{21} \cup C_{16}^{21}$ and $(4,1)_{21}=(14,1)_{21}=(16,1)_{21}$, while all other $(x,1)_{21}=0$ ($0<x<21$).  The other $(i,j)_{21}$ ($0<i \neq j<21$) can be calculated in a similar way from the listed sets.
\end{example}

When $n=4$, we require an additional step, but still obtain a straightforward process.

\begin{theorem}
Consider the finite field $ {\rm GF}(q^4)$ and let $e=(q^4-1)/(q-1)$. Let $\alpha$ be a primitive element of $ {\rm GF}(q^4)$ and let $I$ be a corresponding Singer $\left(\frac{q^4-1}{q-1}, \frac{q^3-1}{q-1}, q+1\right)$-DS.
  
Let $1 \leq k \leq e-1$ and let $B_k^1, \ldots, B_k^{q+1}$ be the $q+1$ blocks of $\operatorname{Dev}(I)$ (each of size $(q^3-1)/(q-1)$) that contain $\{0,k\}$.  Then $S_k$ is the intersection of any two of these, and 
$$\Delta(C_k,C_0)=\bigcup_{j \in S_k \setminus \{0,k\}} C_j.$$
\end{theorem}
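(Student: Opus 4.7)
The plan is to work inside $PG(3,q)$ (the projectivisation of $\mathrm{GF}(q^4)$ as a $\mathrm{GF}(q)$-vector space) and use the standard correspondence between translates of $I$ and hyperplanes of $PG(3,q)$. First I would verify that the collection $B_k^1,\ldots,B_k^{q+1}$ exists and has the claimed size: since $I$ has parameters $\left(\frac{q^4-1}{q-1},\frac{q^3-1}{q-1},q+1\right)$, the difference set property says every non-zero $k\in\mathbb{Z}_e$ arises as a difference from $I$ in exactly $\lambda=q+1$ ways, which is equivalent to $q+1$ translates $i+I$ containing both $0$ and $k$.

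Next I would translate each $B_k^j$ into geometry: under the Singer correspondence, $B_k^j$ is the set of indices of the cyclotomic classes whose union forms a hyperplane $H_k^j$ of $PG(3,q)$ (equivalently, a codimension-$1$ $\mathrm{GF}(q)$-subspace of $\mathrm{GF}(q^4)$). Because $\{0,k\}\subseteq B_k^j$, the projective points $1=\alpha^0$ and $\alpha^k$ both lie in $H_k^j$, and hence the projective line through them, which is precisely $\operatorname{Span}(1,\alpha^k)$, is contained in $H_k^j$. Translating back to index sets, $S_k\subseteq B_k^j$ for every $j$.

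The heart of the proof is a short dimension count. Fix distinct $i$ and $j$; the corresponding hyperplanes $H_k^i\neq H_k^j$ are distinct $3$-dimensional subspaces of $\mathrm{GF}(q)^4$, so $H_k^i+H_k^j=\mathrm{GF}(q)^4$ and therefore $\dim(H_k^i\cap H_k^j)=3+3-4=2$. Projectively, $H_k^i\cap H_k^j$ is a line containing exactly $q+1$ points. Since $S_k$ is itself a $(q+1)$-point line lying in both hyperplanes, $S_k\subseteq H_k^i\cap H_k^j$ forces equality by counting, and translating back yields $S_k=B_k^i\cap B_k^j$. The description of $\Delta(C_k^e,C_0^e)$ is then immediate from Theorem~\ref{thm:main}(iii) applied to this $S_k$.

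I do not foresee any serious obstacle: the argument is a hyperplane-pencil dimension count combined with bookkeeping between $\mathrm{GF}(q)$-subspaces, projective subspaces, and translates of $I$. The only point worth double-checking is the consistency of the two counts of $q+1$: the number of hyperplanes through a fixed projective line in $PG(3,q)$ is the number of $1$-dimensional subspaces of $\mathrm{GF}(q)^4/V\cong\mathrm{GF}(q)^2$ (where $V$ is the underlying $2$-dimensional vector subspace of $S_k$), which is $(q^2-1)/(q-1)=q+1$, matching the Singer $\lambda$-parameter exactly.
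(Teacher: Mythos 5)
Your proof is correct and follows essentially the same route as the paper: the paper's own argument observes that two blocks of the symmetric design meet in exactly $\lambda=q+1$ points (equivalently, that two planes of ${\rm PG}(3,q)$ meet in a line), that both blocks contain the $(q+1)$-point set $S_k$, and concludes equality by counting, which is exactly your dimension-count argument. The paper even states your hyperplane-intersection version explicitly as an alternative phrasing, so there is nothing to add.
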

\begin{proof}
Only the claim about $S_k$ requires proof. For $n \geq 2$, since $\operatorname{Dev}(I)$ is a symmetric $(v,k,\lambda)$-BIBD, every pair of DSs intersect in precisely $\lambda=\frac{q^{n-2}-1}{q-1}$ points.  For two such DSs containing $0$ and $k$, the intersection of the corresponding hyperplanes will contain $\operatorname{Span}(1,\alpha^k)$, hence the intersection of the difference sets will contain $S_k$.  When $n=4$, $\lambda=q+1$ and so the intersection of any two DSs containing $\{0,k\}$ will yield precisely the elements of $S_k$.  Alternatively, in ${\rm PG}(3,q)$ a hyperplane is in fact a plane, and any two planes intersect in a unique line.  Thus any two distinct translations of the Singer difference set that contain $\{0,k\}$ intersect in $S_k$.  In Section~\ref{sec:generaln} we discuss how to find appropriate translations.
\end{proof}

\begin{corollary}
Consider the finite field $ {\rm GF}(q^4)$. Let $e=(q^4-1)/(q-1)$ and $f=q-1$. Let $\alpha$ be a primitive element of $ {\rm GF}(q^4)$; let $I$ be a corresponding Singer difference set in $\mathbb{Z}_e$ and let $I_k$ be the intersection of any two  blocks in $\operatorname{Dev}(I)$ containing $\{0,k\}$.  Then
\begin{itemize}
\item[(i)] $(0,0)_e=f-1$
\item[(ii)] $(0,i)_e=(i,0)_e=(i,i)_e=0$ for all $1 \leq i \leq e-1$
\item[(iii)] For $1 \leq j \leq e-1$ and $i \not\in \{0, j\}$, $(i,j)_e$ equals $1$ if $i \in I_j \setminus \{0,j\}$, and equals $0$ otherwise.
\end{itemize}
\end{corollary}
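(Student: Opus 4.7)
My approach is to derive this statement as a direct corollary of Theorem~\ref{cor:main}, using the identification of $S_k$ with the intersection $I_k$ established in the preceding theorem. Since those two results already carry out all of the structural work, essentially no new argument is required and the corollary reduces to cleanly assembling the pieces.

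For part (i), I would observe that $C_0^e \cup \{0\} = {\rm GF}(q)$ is a subfield of $ {\rm GF}(q^4)$, so the count of solutions to $z_0 + 1 = z_0'$ with $z_0, z_0' \in C_0^e$ is simply the number of $z_0 \in C_0^e$ with $z_0 \neq -1$, which is $f - 1$. This is also the content of part (i) of Theorem~\ref{cor:main}, which holds for any $n \geq 2$.

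For part (ii), I would quote part (ii) of Theorem~\ref{cor:main} directly; it applies for any $n \geq 2$ and its justification via Lemma~\ref{Storer}(ii),(v) is already recorded in the excerpt ($\sum_i(i,0)_e = f-1 = (0,0)_e$ forces all other $(i,0)_e$ to vanish, and then $(0,i)_e$ and $(i,i)_e$ vanish by the same lemma).

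For part (iii), I would apply Theorem~\ref{cor:main}(iii), which states that for $1 \leq j \leq e-1$ and $i \notin \{0,j\}$, we have $(i,j)_e = 1$ if $i \in S_j \setminus \{0,j\}$ and $0$ otherwise. The preceding theorem, specialised to $n = 4$, identifies $S_j$ with $I_j$, the intersection of any two blocks of $\operatorname{Dev}(I)$ containing $\{0,j\}$. Substituting this identification gives part (iii) verbatim. The main obstacle — such as it is — has been dispatched upstream: it was to show that two distinct Singer translates through $\{0,k\}$ meet in exactly the projectivised line $\operatorname{Span}(1,\alpha^k)/{\rm GF}(q)^*$, which is the content of the previous theorem and rests on $\lambda = q+1$ together with the fact that two distinct planes in ${\rm PG}(3,q)$ meet in a unique line. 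Once that is in place the corollary is immediate.
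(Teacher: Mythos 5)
Your proposal is correct and follows exactly the route the paper intends: the corollary is stated without a separate proof precisely because it is the conjunction of Theorem~\ref{cor:main} (parts (i)--(iii)) with the identification $S_k=I_k$ established in the preceding $n=4$ theorem, which is what you assemble. The brief justifications you supply for (i) and (ii) match the arguments already recorded in the proof of Theorem~\ref{cor:main}, so nothing further is needed.
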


\begin{example}
Consider $ {\rm GF}(625)$; we will obtain the cyclotomic numbers of order $(5^4-1)/(5-1)=156$. No general results for $e=156$ exist in the literature.  Take $q=5$ and $n=4$ with $e=156$ and $f=4$, and take the primitive polynomial $x^4+4x^2+4x+2$. From Construction 18.29 of \cite{Handbook}, a corresponding Singer difference set is $$I=\{0, 1, 2, 4, 13, 20, 23, 24, 29, 31, 34, 38, 41, 44, 46, 58, 72, 73, 77, 88,$$
$$ 89, 95, 97, 98, 111, 120, 124, 139, 144, 150,  152\}.$$
Consider $k=1$: here $I \cap (-1+I)=S_1=\{0,1,23,72,88,97\}$.  Hence $(23,1)_{156}=(72,1)_{156}=(88,1)_{156}=(97,1)_{156}=1$ and all other $(i,1)_{156}=0$. For $k=2$, the intersection of $I$ and $-2+I$ is the set $S_2=\{0,2,29,44,95,150\}$, hence $(29,2)_{156}=(44,2)_{156}=(95,2)_{156}=(150,2)_{156}=1$ and all other $(i,2)_{156}=0$.  The other cyclotomic numbers may be obtained similarly; in Section~\ref{sec:generaln} we discuss how to find suitable shifts of $I$ for other values of $k$.  Lemma \ref{Storer} can be applied to simplify computation.  For example, we may apply $(i,j)_{156}=(5i,5j)_{156}$ to the $k=1$ case to obtain $(115,5)_{156}=(48,5)_{156}=(128,5)_{156}=(17,5)_{156}=1$, as an alternative to calculating $S_5$ directly via intersection.
\end{example}

\subsection{Cyclotomic numbers of order $(q^n-1)/(q-1)$ for general $n$}\label{sec:generaln}

For $n$ greater than $4$, more care is required to obtain $S_k$ by intersection, since an arbitrary collection of $n-2$ hyperplanes may intersect in a space of dimension greater than 1.  

We present a procedure which yields $S_k$ ($1 \leq k \leq e-1)$ for any $n \geq 3$ via the intersection of  $n-2$ appropriately-chosen Singer difference sets, when $\alpha^k$ is not in a subfield of $ {\rm GF}(q^n)$. 

\begin{lemma}\label{alpha_shift} 
Let $\alpha$ be a primitive element of $ {\rm GF}(q^n)$, and suppose that $k$ with $1\leq k\leq e$ is such that $\alpha^k$ does not lie in a proper subfield of ${\rm GF}(q^n)$. 
Let $H_0$ be the unique hyperplane of ${\rm PG}(n-1,q)$ spanned by $\{\alpha^0,\alpha^k,\alpha^{2k}, \ldots \alpha^{(n-2)k}\}$ and for $1\leq i\leq n-1$ let $H_i= \alpha^{-ik}H_0$. Then for $r<n-1$,
$\cap_{i=0}^r H_i$ is the subspace of ${\rm PG}(n-1,q)$ of dimension $n-r-2$ spanned by the set of points $\{\alpha^0,\alpha^1,\ldots,\alpha^{n-r-2}\}$.
\end{lemma}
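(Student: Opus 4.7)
The plan is to work in coordinates with respect to the basis $\mathcal{B}=\{1,\alpha^k,\alpha^{2k},\ldots,\alpha^{(n-1)k}\}$ of ${\rm GF}(q^n)$ regarded as a ${\rm GF}(q)$-vector space. First I would verify that $\mathcal{B}$ really is a basis: the hypothesis that $\alpha^k$ lies in no proper subfield of ${\rm GF}(q^n)$ forces the minimal polynomial of $\alpha^k$ over ${\rm GF}(q)$ to have degree $n$, so the $n$ vectors of $\mathcal{B}$ are ${\rm GF}(q)$-linearly independent. In this basis $H_0$ is exactly the coordinate hyperplane defined by the vanishing of the $\alpha^{(n-1)k}$-coefficient; writing $\phi:{\rm GF}(q^n)\to{\rm GF}(q)$ for the linear functional that reads off this coefficient, we have $H_0=\ker\phi$ and $H_i=\alpha^{-ik}H_0=\{x:\phi(\alpha^{ik}x)=0\}$.

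I would then induct on $r$, with the base case $r=0$ being the definition of $H_0$. For the inductive step, assume $\cap_{i=0}^{r-1} H_i$ equals $V_{r-1}:=\operatorname{Span}(\alpha^0,\alpha^k,\ldots,\alpha^{(n-r-1)k})$, and aim to show $V_{r-1}\cap H_r$ equals $V_r:=\operatorname{Span}(\alpha^0,\alpha^k,\ldots,\alpha^{(n-r-2)k})$ (matching the claimed spanning set, once the exponents are read as powers of $\alpha^k$). For $x=\sum_{j=0}^{n-r-1} a_j\alpha^{jk}\in V_{r-1}$, direct multiplication yields $\alpha^{rk}x=\sum_{j=0}^{n-r-1} a_j\alpha^{(j+r)k}$, where the exponents on the right satisfy $r\leq j+r\leq n-1$, so each summand is already a member of $\mathcal{B}$ and no reduction via the minimal polynomial is required. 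Hence the $\alpha^{(n-1)k}$-coefficient of $\alpha^{rk}x$ is precisely $a_{n-r-1}$, so $x\in H_r$ if and only if $a_{n-r-1}=0$, i.e.\ $x\in V_r$. Each inductive step strips off one basis direction, giving the correct ${\rm GF}(q)$-dimension $n-r-1$ and hence projective dimension $n-r-2$.

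The main obstacle, and the only place where the hypothesis $r<n-1$ enters essentially, is maintaining this basis-range condition: we need $r\leq n-2$ so that $\alpha^{rk}\cdot\alpha^{(n-r-1)k}=\alpha^{(n-1)k}$ still lies in $\mathcal{B}$. Without that bound the multiplication would overshoot $\alpha^{(n-1)k}$, triggering a reduction via the minimal polynomial that would introduce cross-terms and destroy the clean identification of $\phi(\alpha^{rk}x)$ with the single coefficient $a_{n-r-1}$. Beyond this point the argument is elementary linear algebra, once one recognises that the basis generated by $\alpha^k$, rather than the standard basis generated by $\alpha$, is the natural coordinate system for the problem.
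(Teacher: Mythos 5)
Your proof is correct, and your reading of the spanning set as powers of $\alpha^k$ rather than of $\alpha$ (consistent with what the lemma must mean for general $k$) matches the paper's intent. However, your inductive step is genuinely different from the paper's. The paper argues indirectly: since $H_{s+1}$ is a hyperplane, the dimension of $\cap_{i=0}^{s+1}H_i$ either equals that of $\cap_{i=0}^{s}H_i$ or drops by one; if it did not drop, then $H_{s+1}$ would contain a spanning set of $H_s$, forcing $H_s=H_{s+1}$, which contradicts the Singer-cycle fact that multiplication by $\alpha^k$ ($0<k<e$) fixes no hyperplane. Your argument instead fixes the basis $\{1,\alpha^k,\ldots,\alpha^{(n-1)k}\}$, realises $H_r$ as $\{x:\phi(\alpha^{rk}x)=0\}$ for the top-coordinate functional $\phi$, and computes directly that on $V_{r-1}$ the condition $\phi(\alpha^{rk}x)=0$ kills exactly the coefficient $a_{n-r-1}$. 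This is more constructive: it identifies the intersection exactly in one pass rather than bounding its dimension and then invoking the (implicit in the paper) containment of the claimed span in every $H_i$, and it does not need the single-cycle property of the Singer action at all. One small inaccuracy in your commentary: the hypothesis $r<n-1$ is not really what keeps the exponents inside $\mathcal{B}$ --- for $x\in V_{r-1}$ the top exponent of $\alpha^{rk}x$ is always exactly $(n-1)k$, for every $r\le n-1$, so no overshoot ever occurs; the restriction $r<n-1$ is only there so that the resulting intersection is a nonempty projective subspace (at $r=n-1$ your computation correctly shows the intersection is empty). This does not affect the validity of the proof on the stated range.
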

\begin{proof} As $\alpha^k$ does not lie in a proper subfield of ${\rm GF}(q^n)$, its minimal polynomial has degree $n$, and so the elements $\alpha^0,\alpha^k,\alpha^{2k}, \ldots \alpha^{(n-2)k}$ are linearly independent, and hence span a unique hyperplane $H_0$ of ${\rm PG}(n-1,q)$.  As $H_i$ with $1 \leq r \leq n-1$ can be obtained from $H_0$ by repeated multiplication by $\alpha$, they are also hyperplanes.

Next we prove the intersection property.  When $r=1$, the result is immediate.  Now suppose the result holds for $r=s$, for some $1 \leq s<n$, i.e. that $\cap_{i=0}^s H_i$ is the subspace of dimension $n-s-1$ given by $\operatorname{Span} \{\alpha^0,\alpha^k,\ldots,\alpha^{(n-s-2)k}\}$.  As $H_{s+1}$ is a hyperplane, we have that $\cap_{i=0}^{s+1} H_i$ has dimension either $n-s-1$ or $n-s-2$.  If $\operatorname{dim}(\cap_{i=0}^{s+1} H_i)=n-s-1$, then $\cap_{i=0}^{s+1} H_i$ is precisely $\operatorname{Span} \{\alpha^0,\alpha^k,\ldots,\alpha^{(n-s-2)k}\}$, which implies that $H_{s+1}$ contains $\{\alpha^0,\alpha^k, \ldots, \alpha^{(n-s-2)k}\}$.  By construction, it also contains $\alpha^{-k}, \ldots, \alpha^{-sk}$ and thus $H_s \subseteq H_{s+1}$. Since $|H_s|=|H_{s+1}|$ we have $H_s=H_{s+1}$. But this is contradiction, since $H_s=\alpha^k H_{s+1}$ and multiplication by $\alpha^k$ does not fix any hyperplane when $k<e$.
\end{proof}

\begin{theorem}\label{hyperplanes}
Let $n \geq 3$. Consider the finite field $ {\rm GF}(q^n)$ and let $e=(q^n-1)/(q-1)$.  Let $\alpha$ be a primitive element of $ {\rm GF}(q^n)$.  For $1 \leq k \leq e-1$ such that $\alpha^k$ does not lie in a subfield of $ {\rm GF}(q^n)$, the following process yields $S_k$:
\begin{itemize}
\item[(i)] Let $I$ be any Singer difference set corresponding to $\alpha$.
\item[(ii)] Form the intersection $L$ of the following $n-2$ translates of $I$:
$$L= \bigcap _{i=0}^{n-3} (-ik+I).$$
\item[(iii)] Define the element $x \in L \cap (-k+L)$.
\item[(iv)] Then $S_k=-x+L$.
\end{itemize}
In particular, this accounts for all $1 \leq k \leq e-1$ when $n$ is prime.
\end{theorem}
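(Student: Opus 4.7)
The plan is to work in projective geometry, apply Lemma~\ref{alpha_shift} to identify the relevant hyperplane intersection as the line through $[1]$ and $[\alpha^k]$, and then use a rigidity argument inside $S_k$ to recover the correct translate in $\mathbb{Z}_e$.

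First I would set up the correspondence $H \leftrightarrow I_H := \{j \in \mathbb{Z}_e : \alpha^j \in H\}$ between hyperplanes of ${\rm PG}(n-1,q)$ and Singer difference sets corresponding to $\alpha$. Under this correspondence, $\alpha^{-k}H$ has index set $-k + I_H$, and all Singer difference sets corresponding to $\alpha$ form a single translation orbit in $\mathbb{Z}_e$. Hence the arbitrary choice $I$ in the theorem satisfies $I = c + I_{H_0}$ for some $c \in \mathbb{Z}_e$, where $H_0$ is the hyperplane appearing in Lemma~\ref{alpha_shift}. Applying the lemma with $r = n-3$ gives $\bigcap_{i=0}^{n-3}\alpha^{-ik}H_0 = \operatorname{Span}(1,\alpha^k)$, whose index set is $S_k$ by definition. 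Translating this back to $\mathbb{Z}_e$ yields $L = \bigcap_{i=0}^{n-3}(-ik + I) = c + S_k$, and the task reduces to showing that $L \cap (-k+L) = \{c\}$.

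The heart of the argument, and the main obstacle, is a uniqueness claim about $S_k$: the pair $\{0,k\}$ is the only pair of elements of $S_k$ whose difference in $\mathbb{Z}_e$ is $k$. Suppose $a,b \in S_k$ with $a - b \equiv k \pmod e$. Then both $\alpha^b$ and $\alpha^a = \alpha^k \alpha^b$ lie in $\operatorname{Span}(1,\alpha^k)$. Writing $\alpha^b = u + v\alpha^k$ with $u,v \in {\rm GF}(q)$ and expanding, $\alpha^a = u\alpha^k + v\alpha^{2k}$; for this to lie in $\operatorname{Span}(1,\alpha^k)$ one needs either $v = 0$ (so $\alpha^b \in {\rm GF}(q)^*$, giving $b \equiv 0$ and $a \equiv k \pmod e$) or $\alpha^{2k} \in \operatorname{Span}(1,\alpha^k)$, which would place $\alpha^k$ in the proper subfield ${\rm GF}(q^2)$ of ${\rm GF}(q^n)$, contradicting the hypothesis. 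Transferring this uniqueness to the translate $L = c + S_k$ gives $L \cap (-k+L) = \{c\}$, hence $x = c$ and $S_k = -x + L$, as required.

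For the final claim, when $n$ is prime the only subfield of ${\rm GF}(q^n)$ properly containing ${\rm GF}(q)$ is ${\rm GF}(q^n)$ itself, so the hypothesis on $\alpha^k$ reduces to $\alpha^k \notin {\rm GF}(q)$; this is automatic for $1 \leq k \leq e-1$, since $\alpha^k \in {\rm GF}(q)$ would force $e \mid k$.
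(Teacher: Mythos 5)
Your proof is correct and follows essentially the same route as the paper: apply Lemma~\ref{alpha_shift} to identify $\bigcap_{i=0}^{n-3}\alpha^{-ik}H_0$ with $\operatorname{Span}(1,\alpha^k)$, use the fact that all Singer difference sets for $\alpha$ form one translation orbit to write $L=c+S_k$, and pin down $c$ via the uniqueness of the internal difference $k$ in $S_k$. In fact you go slightly further than the paper, which merely asserts that the line through $0$ and $k$ ``possesses a unique internal difference of $k$'': your rigidity argument proves it, with the one small imprecision that when $n$ is odd the relation $\alpha^{2k}\in\operatorname{Span}(1,\alpha^k)$ forces $\alpha^k$ into ${\rm GF}(q)$ rather than ${\rm GF}(q^2)$ (the degree of $\alpha^k$ over ${\rm GF}(q)$ is at most $2$ and must divide $n$), which in either case is a proper subfield and still yields the required contradiction.
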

\begin{proof}
First consider $k=1$; the elements $\{\alpha^0,\alpha,\ldots, \alpha^{n-1}\}$ span a hyperplane $H_0$ and the corresponding Singer DS $I$ contains $\{0,1,\ldots,n-1\}$.  By Lemma \ref{alpha_shift}, the intersection of $H_0$ with the hyperplanes $\{H_1, \ldots, H_{n-3}\}$ is a subspace of dimension $n-(n-3)-1=2$ given by $\operatorname{Span}(1,\alpha)$, and correspondingly the intersection of $I$ with the sets $-1+H, \ldots, -(n-3)+H$ is $S_1$.

Now consider an arbitrary $1 \leq k \leq e-1$ such that $\alpha^k$ is not in a proper subfield.  Here $\alpha^k$ has minimal polynomial of degree $n$, the elements $\{\alpha^0, \alpha^k, \ldots, \alpha^{(n-1)k}\}$ span a hyperplane $H$, and the corresponding Singer difference set $D$ contains $\{0,k,2k,\ldots, (n-1)k\}$.  As above, intersecting $D$ with its translates $\{-k+D, -2k+D, \ldots, -(n-3)k+D\}$ yields the line through $0$ and $k$, namely $S_k$.  However, it is in fact sufficient to take any hyperplane $H'$ of $ {\rm GF}(q^n)$ and the translates of $H'$ by $k$.  These are related to $H$ and its translates by $k$ via a linear transformation, hence they have the same intersection properties (in particular they intersect in a line $L$).  $L$ is obtained from the line through $0$ and $k$ by this linear transformation (multiplication by a power of $\alpha$), and so also possesses a unique internal difference of $k$.  Let $x$ be the unique element in $L \cap (-k+L)$; then $-x+L$ is the translate of $L$ containing $0$ and $k$, namely $S_k$.
\end{proof}

The following result yields $S_k$ in the case when $\alpha^k$ lies in a subfield ${\rm GF}(q^d)$ of ${\rm GF}(q^n)$. In this case $k$ is a multiple of $(q^n-1)/(q^d-1)$.

\begin{theorem}\label{k_in_subfield}
Let $n \geq 3$.  Consider $ {\rm GF}(q^n)$ and let $\alpha$ be a primitive element of $ {\rm GF}(q^n)$.  Let $d\mid n$ ($d>1$).  Let $e=(q^n-1)/(q-1)$, $e'=(q^d-1)/(q-1)$ and $g=(q^n-1)/(q^d-1)=e/e'$.  

Let $\gamma$ be the primitive element of $ {\rm GF}(q^d)$ given by $\gamma=\alpha^g$.
Take the Singer difference set $I$ in $\mathbb{Z}_e$ corresponding to $\alpha$ and the Singer difference set $I'$ in $\mathbb{Z}_{e'}$ corresponding to $\gamma$.  For $0<a<e-1$, let $S_a=\{j: \alpha^j \in \operatorname{Span}(1,\alpha^a), 0 \leq j \leq e-1\} \subseteq \mathbb{Z}_e$ and for $0<b<e'-1$, let $S'_b=\{j: \gamma^j \in \operatorname{Span}(1,\gamma^b), 0 \leq j \leq e'-1\} \subseteq \mathbb{Z}_{e'}$ (where both spans are over $ {\rm GF}(q)$) as in Definition \ref{S_k}.

Then for $k=xg$ $(1 \leq x \leq q^d-2)$ $$S_k=g S'_x.$$
\end{theorem}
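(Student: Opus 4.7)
The plan is to exploit the fact that when $k = xg$, the element $\alpha^k$ already lives in the subfield ${\rm GF}(q^d)$, so the entire question collapses into ${\rm GF}(q^d)$ alone. The starting observation is that
$$\alpha^k = \alpha^{xg} = (\alpha^g)^x = \gamma^x,$$
and therefore $\operatorname{Span}_{{\rm GF}(q)}(1, \alpha^k) = \operatorname{Span}_{{\rm GF}(q)}(1, \gamma^x)$. Since the right-hand side is a two-dimensional ${\rm GF}(q)$-subspace contained entirely within ${\rm GF}(q^d) \subseteq {\rm GF}(q^n)$, any $\alpha^j$ lying in this span must itself lie in ${\rm GF}(q^d)$.

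The next step is to characterise which exponents $j$ satisfy $\alpha^j \in {\rm GF}(q^d)$. Because ${\rm GF}(q^d)^* = \langle \gamma \rangle = \langle \alpha^g \rangle$ has order $q^d - 1$, a standard check gives $\alpha^j \in {\rm GF}(q^d)$ if and only if $g \mid j$. The relevant values of $j$ in $\{0, 1, \ldots, e-1\}$ are therefore exactly $j = gy$ for $y \in \{0, 1, \ldots, e'-1\}$, and for such $j$ we have $\alpha^j = \gamma^y$. The condition $\alpha^j \in \operatorname{Span}_{{\rm GF}(q)}(1, \alpha^k)$ then becomes $\gamma^y \in \operatorname{Span}_{{\rm GF}(q)}(1, \gamma^x)$, which is precisely the defining condition for $y \in S'_x \subseteq \mathbb{Z}_{e'}$.

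Combining these two observations immediately yields $S_k = \{gy \bmod e : y \in S'_x\} = g S'_x$. The only verification still to make is that the map $y \mapsto gy \bmod e$ sending $\mathbb{Z}_{e'}$ into $\mathbb{Z}_e$ is injective on $\{0, 1, \ldots, e'-1\}$, which holds because $g$ has additive order $e' = e/g$ in $(\mathbb{Z}_e, +)$, so the images $0, g, 2g, \ldots, (e'-1)g$ are pairwise distinct modulo $e$. There is no significant obstacle here: once one records that the span is confined to the subfield via the identification $\alpha^k = \gamma^x$, the rest of the argument is essentially bookkeeping between the exponent groups of $\alpha$ and of $\gamma$. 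The most delicate step is simply the passage from ``$\alpha^j$ lies in the span'' to ``$g \mid j$''; everything else is a direct translation of the definitions.
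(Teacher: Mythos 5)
Your proposal is correct and follows essentially the same route as the paper: identify $\alpha^{xg}=\gamma^x$, observe that $\operatorname{Span}(1,\gamma^x)$ lies in the subfield ${\rm GF}(q^d)$, and translate exponents of $\gamma$ back to exponents of $\alpha$ by multiplying by $g$. You simply make explicit two steps the paper leaves implicit (that $\alpha^j\in{\rm GF}(q^d)$ forces $g\mid j$, and that $y\mapsto gy$ is injective on $\{0,\ldots,e'-1\}$), which is a welcome clarification rather than a divergence.
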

\begin{proof}
Let $0<k<e-1$ be such that $\alpha^k \in {\rm GF}(q^d)$.  Then $g\mid k$, i.e. $k=xg$ for some $1 \leq x \leq q^d-2$. To determine $S_{xg}$, the indices of the span over $ {\rm GF}(q)$ of $\{1,\alpha^{xg}\}$, we determine the span over $ {\rm GF}(q)$ of $\{1,\gamma^{x}\}$. Consider the cyclotomic classes of order $e'$ of $ {\rm GF}(q^d)$, i.e. $C_i^{e'}$ ($0 \leq i \leq e'-1$).  By Theorem \ref{thm:main}, $\operatorname{Span}\{1,\gamma^{x}\}$ is the union of the cyclotomic classes of order $e'$ indexed by $S'_{x}$, obtainable using the Singer difference set $I'$.  Since $\gamma=\alpha^g$, multiplying $S'_x$ by $g$ yields $S_{xg}$. 
\end{proof}

We note that in order to apply Theorem \ref{k_in_subfield} we need a difference set arising from the minimal polynomial of $\gamma$.  The Conway polynomials \cite{Nic} or their recent alternative \cite{Lue} are a natural choice for the minimal polynomials of $\alpha$ and $\gamma$ for this purpose, as by construction these are designed to be compatible in this way.  Tables of these polynomials are available on Frank L\"{u}beck's website \url{http://www.math.rwth-aachen.de/~Frank.Luebeck/data/ConwayPol/index.html}.

\begin{theorem}\label{n>4}
Let $n \geq 3$. Consider the finite field $ {\rm GF}(q^n)$. Let $e=(q^n-1)/(q-1)$ and $f=q-1$. Let $\alpha$ be a primitive element of $ {\rm GF}(q^n)$; let $I$ be a corresponding Singer difference set in $\mathbb{Z}_e$. Let $1 \leq k \leq e-1$.  Then $$\Delta(C_k,C_0)=\bigcup_{j \in S_k \setminus \{0,k\}} C_j$$
where $S_k$ is characterized in Theorems \ref{hyperplanes} and \ref{k_in_subfield}, namely
\begin{itemize}
\item[(i)] if $\alpha^k$ does not lie in a subfield of $ {\rm GF}(q^n)$ then 
$$S_k=x+ (\bigcap _{i=0}^{n-3} (-ik+I))$$
where $x \in (\cap _{i=0}^{n-3} (-ik+I)) \cap (-k + \cap _{i=0}^{n-3} (-ik+I))$;
\item[(ii)] if $\alpha^k=\alpha^{y (q^n-1)/(q^d-1)}$ lies in $ {\rm GF}(q^d)$ ($d>1$, $d\mid n$) then $$S_k= \left(\frac{q^n-1}{q^d-1}\right) S'_y$$
 where $S'_y=\{j: \gamma^j \in  \operatorname{Span}(1,\gamma^y), 0 \leq j \leq e'-1\}$.
\end{itemize}
Moreover
\begin{itemize}
\item[(a)] $(0,0)_e=f-1$
\item[(b)] $(0,i)_e=(i,0)_e=(i,i)_e=0$ for all $1 \leq i\leq e-1$
\item[(c)] For $1 \leq j \leq e-1$ and $i \not\in \{0, j\}$, $(i,j)_e$ equals $1$ if $i \in S_j \setminus \{0,j\}$, and equals $0$ otherwise.
\end{itemize}
\end{theorem}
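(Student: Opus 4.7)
The plan is to recognise this theorem as a synthesis of what has been established in the preceding results, rather than a genuinely new argument. The key observation is that the identity $\Delta(C_k, C_0) = \bigcup_{j \in S_k \setminus \{0,k\}} C_j$ is exactly the content of Theorem~\ref{thm:main}(iii), and the evaluation statements (a), (b), (c) reproduce parts (i), (ii), (iii) of Theorem~\ref{cor:main} verbatim. So I would open the proof by citing these two results to dispose of everything except the explicit description of $S_k$ itself.

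Next I would handle the case split. For an arbitrary $k$ with $1 \leq k \leq e-1$, the field element $\alpha^k$ either generates the full field $\mathrm{GF}(q^n)$ over $\mathrm{GF}(q)$, or its minimal polynomial has degree $d$ for some proper divisor $d$ of $n$ with $d>1$, in which case $\alpha^k \in \mathrm{GF}(q^d)$. This latter condition is equivalent to $k$ being a multiple of $g = (q^n-1)/(q^d-1)$, since the powers $\{\alpha^{jg} : 0 \leq j \leq q^d-2\}$ are precisely the nonzero elements of $\mathrm{GF}(q^d)$, and the values $\alpha^0,\alpha^1,\ldots,\alpha^{e-1}$ are pairwise distinct. (Note that $k=0$ corresponds to $\alpha^k \in \mathrm{GF}(q)$, which is excluded.) So the two cases (i) and (ii) of the theorem are exhaustive and can be handled separately.

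For case (i), where $\alpha^k$ is not in a proper subfield, Theorem~\ref{hyperplanes} gives exactly the prescription in the statement: form the intersection $L=\bigcap_{i=0}^{n-3}(-ik+I)$, identify the unique element $x \in L \cap (-k+L)$ (whose existence and uniqueness follow because $L$ arises from a line of $\mathrm{PG}(n-1,q)$ and hence inherits the difference-set property with each nonzero difference appearing exactly once), and shift by $-x$ to obtain $S_k$. For case (ii), writing $k=xg$ with $1 \leq x \leq e'-1$, Theorem~\ref{k_in_subfield} gives $S_k = g S'_x$, where $S'_x$ is the analogous line set computed inside the smaller field $\mathrm{GF}(q^d)$ with primitive element $\gamma=\alpha^g$.

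The main obstacle, if there is one, is not mathematical but bookkeeping: making sure the case distinction is genuinely exhaustive and that case (ii) is stated in a form compatible with Theorem~\ref{k_in_subfield}, which requires the chosen Singer difference set $I'$ in $\mathbb{Z}_{e'}$ to correspond to $\gamma=\alpha^g$ rather than to some independently chosen primitive element of $\mathrm{GF}(q^d)$. Once this compatibility is emphasised (and the use of Conway polynomials is recalled as a canonical way to ensure it), the proof amounts to quoting Theorems \ref{thm:main}, \ref{cor:main}, \ref{hyperplanes}, and \ref{k_in_subfield} in sequence.
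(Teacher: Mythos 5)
Your proposal is correct and matches the paper's treatment: the paper states Theorem~\ref{n>4} without a separate proof, precisely because it is the assembly of Theorem~\ref{thm:main}(iii), Theorem~\ref{cor:main}, Theorem~\ref{hyperplanes} and Theorem~\ref{k_in_subfield} that you describe, and your checks on the exhaustiveness of the case split and on the compatibility of $I'$ with $\gamma=\alpha^g$ are the right points to verify.
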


\begin{example}
We consider $ {\rm GF}(729)$ as a degree $6$ extension over $ {\rm GF}(3)$; here $e=(3^6-1)/(3-1)=364$.  We illustrate the use of Theorem \ref{n>4} to determine $S_k$ for $1 \leq k \leq 363$, and hence the cyclotomic numbers of order $364$; there is no result in the literature for the case $e=364$.  Take primitive polynomial $x^6+2x^4+x^3+x^2+x+2$ for $ {\rm GF}(729)$ over $ {\rm GF}(3)$ and primitive polynomial $x^3+x^2+2x+1$ for $ {\rm GF}(27)$ over $ {\rm GF}(3)$ ($\alpha$ is a root of the former and $\gamma=\alpha^{28}$ is a root of the latter).

For $k$ such that $\alpha^k$ is not in a subfield, we determine $S_k$ as described in Theorem \ref{hyperplanes}: we take the corresponding Singer difference set $I$ (of size $\frac{3^5-1}{3-1}=121$) and intersect it with its translates $-k+I, -2k+I, -3k+I$ to obtain $L$, then shift by $L \cap (-k+L)$.  For example, for $k=1$ this yields $S_1=\{ 0, 1, 27, 322 \}$ and for $k=2$ we obtain $S_2=\{0,2, 90,349\}$.  Hence $(27,1)_{364}=(322,1)_{364}=1$ and $(x,1)_{364}=0$ for all other $1 \leq x \leq 363$; similarly $(90,2)_{364}=(349,2)_{364}=1$ and $(x,2)_{364}=0$ for all other $1 \leq x \leq 363$.  

Now, consider $k$ such that $\alpha^k$ lies in the subfield $ {\rm GF}(27)=\langle \gamma \rangle$.  In the notation of Theorem \ref{k_in_subfield}, $e'=13$ and $g=28$, so any such $k$ has the form $28x$ for some $1 \leq x \leq 26$.  A suitable Singer difference set is $I'=\{0,1,8,10\}$; since $ {\rm GF}(27)$ is a degree $3$ extension over $ {\rm GF}(3)$, each $S'_x$ will be a translate of $I'$. For $k=28$, $S'_1=\{0,1,8,10\}$ and so $S_{28}=\{0,28,224,280\}$. Consider $k=56=28.2$; here $S'_2=\{0,2,5,6\}$, and so $S_{56}=28 \{0,2,5,6\}=\{0,56,140,168\}$.   Hence $(224,28)_{364}=(280,28)_{364}=1$ and $(x,28)_{364}=0$ for all other $1 \leq x \leq 363$; similarly $(140,56)_{364}=(168,56)_{364}=1$ and $(x,56)_{364}=0$ for all other $1 \leq x \leq 363$.
\end{example}

\section{Cyclotomic numbers of order $\epsilon\mid (q^n-1)/(q-1)$}

We next show how to use a partitioning construction to obtain results for cyclotomic numbers of order $\epsilon$ dividing  $(q^n-1)/(q-1)$.  The following lemma is from \cite{HucJoh}.
\begin{lemma}\label{union}
Let $q^n=ef+1$ and let $\epsilon\mid e$.  For $0 \leq j \leq \epsilon-1$, 
$$C_j^{\epsilon}=\cup_{i=0}^{\frac{e}{\epsilon}-1} C_{i \epsilon +j}^e.$$
In particular $C_0^{\epsilon}=C_0^e \cup C_{\epsilon}^e \cup \cdots \cup C_{\epsilon (\frac{e}{\epsilon}-1)}^e$.
\end{lemma}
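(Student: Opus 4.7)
The plan is to establish the equality by rewriting each side explicitly as a set of powers of $\alpha$ and checking that the resulting exponent sets coincide. Setting $m := e/\epsilon$ and $f_\epsilon := (q^n-1)/\epsilon = mf$, Definition \ref{def:cclass} gives $C_j^\epsilon = \{\alpha^{\epsilon s + j} : 0 \leq s \leq f_\epsilon - 1\}$, while for each $0 \leq i \leq m-1$ we have $C_{i\epsilon + j}^e = \{\alpha^{et + i\epsilon + j} : 0 \leq t \leq f - 1\}$.

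The key observation is that since $e = m\epsilon$, the exponent $et + i\epsilon + j$ may be rewritten as $\epsilon(mt + i) + j$. As the pair $(t,i)$ ranges over $\{0, \ldots, f-1\} \times \{0, \ldots, m-1\}$, the division algorithm guarantees that $mt + i$ ranges bijectively over $\{0, 1, \ldots, mf - 1\} = \{0, 1, \ldots, f_\epsilon - 1\}$. Hence the union on the right-hand side consists of exactly the elements $\{\alpha^{\epsilon s + j} : 0 \leq s \leq f_\epsilon - 1\}$, which is precisely $C_j^\epsilon$.

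To confirm that the union on the right-hand side is genuinely disjoint (so that its cardinality is $m \cdot f = f_\epsilon$, matching $|C_j^\epsilon|$), I would note that the classes $C_{i\epsilon + j}^e$ for distinct $i \in \{0, \ldots, m-1\}$ are disjoint, since $i_1\epsilon + j \not\equiv i_2\epsilon + j \pmod{e}$ whenever $i_1 \neq i_2$ lie in this range (as $|i_1 - i_2|\epsilon < e$). The particular case $j = 0$ is then immediate. There is no substantive obstacle here; the argument is essentially bookkeeping with the definition of cyclotomic classes, and the only care required is to use the $m = e/\epsilon$ factorisation to recognise the union as a reindexing of the larger class.
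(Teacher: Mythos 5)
Your proof is correct: the re-indexing $et + i\epsilon + j = \epsilon(mt+i)+j$ together with the bijection $(t,i)\mapsto mt+i$ from $\{0,\ldots,f-1\}\times\{0,\ldots,m-1\}$ onto $\{0,\ldots,mf-1\}$ is exactly the bookkeeping needed, and your identification $f_\epsilon = (q^n-1)/\epsilon = mf$ is right. The paper itself states this lemma without proof, citing \cite{HucJoh}, so there is no in-paper argument to compare against; your direct verification is the natural one and nothing is missing.
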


Combining this with the previous section, we have the following.
\begin{theorem}\label{GeneralCycTheorem}
Consider the finite field $ {\rm GF}(q^n)$, where $n \geq 2$.  Let $e=(q^n-1)/(q-1)$ and let $\alpha$ be a primitive element of $ {\rm GF}(q^n)$. Let $q^n=ef+1$ and let $\epsilon\mid e$. Then
\begin{itemize}
\item[(i)] $$(0,0)_{\epsilon}=(f-1)+\sum_{s=1}^{\frac{e}{\epsilon}-1} \sum_{\substack{r \equiv 0 \bmod \epsilon,\\ 1 \leq r \leq e-1}} (r,\epsilon s)_e$$
\item[(ii)] For all $0<i<e$, 
$$(i,0)_{\epsilon}=(0,i)_{\epsilon}=(e-i,e-i)_{\epsilon}=\sum_{s=1}^{\frac{e}{\epsilon}-1} \sum_{\substack{r \equiv i \bmod \epsilon,\\ 1 \leq r \leq e-1}} (r,\epsilon s)_e$$
\item[(iii)] For $0 < i \neq j <e$, 
$$(i,j)_{\epsilon}=\sum_{s=0}^{\frac{e}{\epsilon}-1} \sum_{\substack{r \equiv i \bmod \epsilon \\ 1 \leq r \leq e-1}} (r,\epsilon s+ j)_e$$
\end{itemize}
\end{theorem}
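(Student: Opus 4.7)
My strategy is to reduce $(i,j)_\epsilon$ directly to a sum of cyclotomic numbers of order $e$ via Lemma \ref{union}, then prune the resulting double sum using the vanishing identities of Theorem \ref{cor:main} (parts (i)–(ii)) for cyclotomic numbers of order $e = (q^n-1)/(q-1)$.

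\medskip

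\noindent\textbf{Step 1: decomposition.} By definition, $(i,j)_\epsilon$ counts ordered pairs $(z_i,z_j) \in C_i^\epsilon \times C_j^\epsilon$ with $z_i = z_j - 1$. Lemma \ref{union} expresses each $C_i^\epsilon$ as the disjoint union $\bigsqcup_{a=0}^{e/\epsilon - 1} C_{a\epsilon + i}^e$, and similarly for $C_j^\epsilon$. Since every element of $ {\rm GF}(q^n)^*$ lies in a unique class of order $e$, these unions are disjoint, so
\[
(i,j)_\epsilon \;=\; \sum_{a=0}^{e/\epsilon-1}\sum_{b=0}^{e/\epsilon-1} (a\epsilon + i,\, b\epsilon + j)_e.
\]
This is the master identity from which all three cases will follow by reindexing $r = a\epsilon + i$ and $s\epsilon = b\epsilon$ in the appropriate ranges.

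\medskip

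\noindent\textbf{Step 2: pruning with Theorem \ref{cor:main}.} The key observation is that, because $e = (q^n-1)/(q-1)$, Theorem \ref{cor:main}(i)--(ii) gives $(0,0)_e = f-1$ and $(r,0)_e = (0,r)_e = (r,r)_e = 0$ for every $1 \le r \le e-1$. I will apply this to discard zero contributions in each case:
\begin{itemize}
    \item For part (i), the double sum runs over $r,s$ both divisible by $\epsilon$. The $s = 0$ column contributes $(0,0)_e + \sum_{r\equiv 0,\,1\le r\le e-1}(r,0)_e = (f-1) + 0$, leaving only $s \ge 1$. For each such $s$, the $r=0$ term $(0,s\epsilon)_e = 0$ also drops, giving the stated inner sum over $1 \le r \le e-1$.
    \item For part (ii) with $i > 0$, the $s = 0$ column contributes $\sum_{r\equiv i\pmod\epsilon}(r,0)_e = 0$ since every such $r$ is nonzero; the remaining $s\ge 1$ sum is the claimed expression. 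The equalities $(i,0)_\epsilon = (0,i)_\epsilon = (e-i,e-i)_\epsilon$ follow from Lemma \ref{Storer}(ii)–(iii) (noting that when $q$ is a prime power with $q>2$, $f = q-1$ may be anything, but the symmetry $(i,j)_\epsilon = (\epsilon - i,\, j - i)_\epsilon$ of Lemma \ref{Storer}(ii) always holds, and the same reduction is immediate).
    \item For part (iii) with $0 < i \ne j$, no $r$ with $r \equiv i \pmod \epsilon$ equals $0$, so the sum runs over all $s \in \{0,1,\dots,e/\epsilon - 1\}$ and $r \equiv i \pmod \epsilon$, $1 \le r \le e-1$, as stated.
\end{itemize}

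\medskip

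\noindent\textbf{Anticipated difficulty.} The proof is essentially bookkeeping; the only care required is in Step 2, where I need to track which index combinations yield the vanishing cyclotomic numbers of Theorem \ref{cor:main}(ii), and to make sure the restriction $1 \le r \le e-1$ (rather than $0 \le r \le e-1$) in the stated formulas is exactly what survives after removing zero terms. There are no deeper obstructions: once the decomposition of Step 1 is in place, the three cases follow by segregating the $s = 0$ column (which contains the only nontrivial contribution $(0,0)_e = f-1$ when $i = j = 0$, and otherwise vanishes) from the $s \ge 1$ columns.
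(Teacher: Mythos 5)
Your proof is correct, and it reaches the result by a more direct route than the paper's. You expand $(i,j)_\epsilon$ straight from its definition as a solution count, using the partition of Lemma \ref{union} to get the master identity $(i,j)_\epsilon=\sum_{a}\sum_{b}(a\epsilon+i,\,b\epsilon+j)_e$, and then prune with the vanishing statements $(r,0)_e=(0,r)_e=0$ for $r\neq 0$ and $(0,0)_e=f-1$ from Theorem \ref{cor:main}; the index bookkeeping in all three parts checks out. The paper instead works at the level of difference multisets: it decomposes $\Delta(C_0^\epsilon)$ and $\Delta(C_j^\epsilon,C_0^\epsilon)$ via Lemmas \ref{DM} and \ref{union} into unions of multisets $\Delta(C_{(s+t)\epsilon}^e,C_{s\epsilon}^e)$ and counts occurrences of order-$e$ classes inside them, which after translating by $\alpha^{s\epsilon}$ reduces to the same order-$e$ cyclotomic numbers. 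The two arguments encode identical counting, but your double sum makes it transparent that only the $b=0$ column and the $a=0$ terms require special treatment, whereas the paper's phrasing keeps the statement aligned with its difference-set theme. One small point to tighten: the equality $(0,i)_\epsilon=(i,0)_\epsilon$ in part (ii) does not follow from Lemma \ref{Storer}(ii) alone --- that identity only yields $(i,0)_\epsilon=(\epsilon-i,\epsilon-i)_\epsilon$. You need Lemma \ref{Storer}(iii), which does apply here because the order-$\epsilon$ parameter $(q^n-1)/\epsilon$ is a multiple of $f=q-1$ and hence even whenever $p$ is odd, the case $p=2$ being automatic; this is exactly the justification the paper gives, and your parenthetical should say this rather than appeal to part (ii) ``always holding.''
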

\begin{proof}
For (i) and (ii), we determine the cyclotomic classes of order $\epsilon$ in $\Delta(C_0^{\epsilon})$.  By Lemma \ref{DM}, $\Delta(C_0^{\epsilon})=\cup_{s=1}^{\frac{ef}{\epsilon}-1} (\alpha^{ \epsilon s}-1)C_0^{\epsilon}=\cup_{i=0}^{\epsilon-1} (i,0)_{\epsilon} C_i^{\epsilon}$.
By Lemma \ref{union}, $\Delta(C_0^{\epsilon})= (\cup_{s=0}^{\frac{e}{\epsilon}-1} \Delta(C_{s \epsilon}^e)) \cup ( \cup_{t=1}^{\frac{e}{\epsilon}-1} \cup_{s=0}^{\frac{e}{\epsilon}-1}  \Delta(C_{(s+t)\epsilon}^e, C_{s\epsilon}^e))$.  
Each $C_i^e$ in $\Delta(C_0^e)$ corresponds to an occurrence of $C_{i \bmod \epsilon}^{\epsilon}$ in $\cup_{s=0}^{\frac{e}{\epsilon}-1}\Delta(C_{s\epsilon}^e)$, and each $C_i^e$ in the multiset $\Delta(C_{t \epsilon}^e,C_0^e)$ ($1 \leq t \leq (e/\epsilon)-1$) corresponds to an occurrence of $C_{i \bmod \epsilon}^{\epsilon}$ in $\cup_{s=0}^{\frac{e}{\epsilon}-1} \alpha^{s \epsilon} \Delta(C_{t \epsilon}^e,C_0^e)=\cup_{s=0}^{\frac{e}{\epsilon}-1}\Delta(C_{(s+t)\epsilon}^e,C_{s \epsilon}^e)$. Hence the $ef/\epsilon-1$ classes of order $e$ in $\Delta(C_0^e)$ and $\Delta(C_{t \epsilon}^e,C_0^e)$ ($0 <t<\frac{e}{\epsilon}$) determine the classes of order $\epsilon$ in $\Delta(C_0^{\epsilon})$.
For (i), since $C_0^e$ is a subfield, $\Delta(C_0^e)$ comprises $f-1$ copies of $C_0^e$ and no copies of any other class, yielding $f-1$ copies of $C_0^{\epsilon}$.  The other occurrences of $C_0^{\epsilon}$ in $\Delta(C_0^{\epsilon})$ will correspond to occurrences of $C_r^e$ in $\Delta(C_{s \epsilon}^e,C_0^e)$ ($1 \leq s \leq \frac{e}{\epsilon}-1$) where $0<r<e$ and $\epsilon\mid r$; these are counted by $(r,\epsilon s)_e$.  Similarly, for (ii), the occurrences of $C_r^e$ in $\Delta(C_0^{\epsilon})$ with $0<r<e$ and $r \equiv i \bmod \epsilon$ account for the cyclotomic numbers $(i,0)_{\epsilon}$ (and hence $(0,i)_{\epsilon}=(-i,-i)_{\epsilon}$ by Lemma \ref{Storer}, as $p=2$ or $f\mid \frac{q^n-1}{\epsilon}$ and $f$ is even).  

For (iii), for $0<j<\epsilon$, consider $\Delta(C_j^{\epsilon}, C_0^{\epsilon})=\cup_{s=0}^{\frac{ef}{\epsilon}} (\alpha^{ \epsilon s+j}-1) C_0^{\epsilon}  =\cup_{i=0}^{\epsilon-1} (i,j)_{\epsilon} C_i^{\epsilon}=\cup_{s=0}^{\frac{e}{\epsilon}-1} \cup_{t=0}^{\frac{e}{\epsilon}-1} \Delta(C_{s\epsilon+j}^e, C_{t\epsilon}^e)$.  Each $C_i^e$ in the multiset $\Delta(C_{t \epsilon+j}^e,C_0^e)$ corresponds to an occurrence of $C_{i \bmod \epsilon}^{\epsilon}$ in $\cup_{s=0}^{\frac{e}{\epsilon}-1} \Delta(C_{(s+t) \epsilon+j}^e, C_{s \epsilon}^e)$. These account for all $ef/\epsilon$ classes of order $\epsilon$ occurring in $\Delta(C_j^{\epsilon})$ and are counted by $(r, \epsilon s+j)_e$ where $0<r<\epsilon$ and $r \equiv i \bmod \epsilon$.
\end{proof}

When $n=2$, we obtain an explicit expression for the uniform cyclotomy case which coincides with that of \cite{BauMilWar}, showing that the cyclotomic numbers of order $\epsilon\mid q+1$ in $ {\rm GF}(q^2)$ are uniform. It also coincides with the standard results \cite{Sto} for $\epsilon=2$.

\begin{corollary}\label{UniformCor}
Let $q$ be a power of a prime $p$ and consider $ {\rm GF}(q^2)$. Let $q^2=ef+1$ with $e=q+1$ and $f=q-1$.  If  $\epsilon \geq 2$ is a divisor of $e$, then
\begin{itemize}
\item[(i)] $(0,0)_{\epsilon}=(f-1)+(\frac{e}{\epsilon}-1)(\frac{e}{\epsilon}-2)=(\frac{e}{\epsilon})^2+(\epsilon-3) \frac{e}{\epsilon}-1$
\item[(ii)] $(i,0)_{\epsilon}=(0,i)_{\epsilon}=(\epsilon-i,\epsilon-i)_{\epsilon}=\frac{e}{\epsilon}(\frac{e}{\epsilon}-1)$
\item[(iii)] $(i,j)_{\epsilon}=(\frac{e}{\epsilon})^2$.
\end{itemize}
\end{corollary}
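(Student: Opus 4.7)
The strategy is to apply Theorem \ref{GeneralCycTheorem} with $n=2$, substitute the order-$e$ cyclotomic numbers computed in the corollary to Theorem \ref{cor:main}, and simplify. In the $n=2$ case, with $e=q+1$ and $f=q-1$, those numbers are maximally clean: $(0,0)_e=f-1$; $(0,i)_e=(i,0)_e=(i,i)_e=0$ for every $0<i<e$; and $(i,j)_e=1$ for every $1\leq i\neq j\leq e-1$. So every summand in the three formulas of Theorem \ref{GeneralCycTheorem} collapses to either $0$ or $1$, and the task reduces to a careful count of how many pairs $(r,\epsilon s+j)$ in the required range are distinct and both nonzero.

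For part (i), the outer index runs over $s\in\{1,\dots,\tfrac{e}{\epsilon}-1\}$ and the inner index over the $\tfrac{e}{\epsilon}-1$ nonzero multiples $r\in\{\epsilon,2\epsilon,\dots,(\tfrac{e}{\epsilon}-1)\epsilon\}$. For each fixed $s$, exactly one choice satisfies $r=\epsilon s$ and yields $(r,r)_e=0$, while each of the other $\tfrac{e}{\epsilon}-2$ choices contributes $(r,\epsilon s)_e=1$. Summing gives $(\tfrac{e}{\epsilon}-1)(\tfrac{e}{\epsilon}-2)$; adding the $f-1$ contribution from the subfield $\Delta(C_0^e)$ term and substituting $q=\epsilon(\tfrac{e}{\epsilon})-1$ (equivalently $f-1=\epsilon(\tfrac{e}{\epsilon})-3$) yields the stated closed form after a routine expansion.

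For (ii) and (iii) no collisions arise. In (ii), $r$ ranges over the $\tfrac{e}{\epsilon}$ residues congruent to $i\not\equiv 0\pmod\epsilon$ in $[1,e-1]$, while $\epsilon s\equiv 0\pmod\epsilon$, so $r\neq\epsilon s$ and every summand equals $1$, giving $\tfrac{e}{\epsilon}(\tfrac{e}{\epsilon}-1)$. In (iii) we similarly have $r\equiv i\not\equiv j\equiv \epsilon s+j\pmod\epsilon$, so no coincidence occurs, and $\epsilon s+j$ is automatically nonzero because $j\neq 0$; summing $1$ over $\tfrac{e}{\epsilon}$ inner and $\tfrac{e}{\epsilon}$ outer values yields $(\tfrac{e}{\epsilon})^2$. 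As a sanity check, these three formulas match Definition \ref{Baumert} and Theorem \ref{BMWTheorem} (with $\eta=e/\epsilon$ in the role of $(r-1)/e$ there), confirming uniformity.

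The only subtle point is the single collision in part (i): recognising that for each $s$ exactly one value of $r$ triggers the $(r,r)_e=0$ identity rather than contributing $1$, and then translating the extra $f-1$ term into the $(\tfrac{e}{\epsilon},\epsilon)$ form to match the target expression. Parts (ii) and (iii) are pure bookkeeping once the non-collision condition is verified via the residue argument.
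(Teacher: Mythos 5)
Your proof is correct and follows essentially the same route as the paper: specialise Theorem \ref{GeneralCycTheorem} to $n=2$, where by the corollary to Theorem \ref{cor:main} every order-$e$ cyclotomic number is $0$ or $1$, and count the surviving summands, with the single collision $r=\epsilon s$ in part (i) contributing the $-1$ per outer index. The only slip is in your closing sanity check: matching Theorem \ref{BMWTheorem} requires taking $r=-q$ and hence $\eta=-e/\epsilon$ rather than $\eta=e/\epsilon$ (otherwise the linear terms in $\eta^2-(\epsilon-3)\eta-1$ and $\eta^2+\eta$ would have the wrong sign); this aside does not affect the proof itself.
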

\begin{proof}
In the case $n=2$, each $\Delta(C_{s \epsilon}^e,C_0^e)$ ($1 \leq s \leq \frac{e}{\epsilon}-1)$ contains one copy each of the classes $C_i^e$ where $i \in \{1,\ldots,e-1\} \setminus \{s \epsilon\}$.  Each of the classes $C_i^e$ where $\epsilon\mid i$ yields a copy of $C_0^{\epsilon}$ in $\Delta(C_0^{\epsilon})$.  In the interval $[1,e-1]=\{1,2,\ldots (\frac{e}{\epsilon}-1)\epsilon+(\epsilon-1)\}$ there are $\frac{e}{\epsilon}-1$ multiples of $\epsilon$, so each of the $(\frac{e}{\epsilon}-1)$ multisets $\Delta(C_{s \epsilon}^e,C_0^e)$ contributes $(\frac{e}{\epsilon}-2)$ copies of $C_0^{\epsilon}$ to the count for $(0,0)_{\epsilon}$ (the other part of this is from $\Delta(C_0^e)$). For (ii), when $0<j<\epsilon$, there are $\frac{e}{\epsilon}$ elements in $[1,e-1]$ congruent to $j \bmod \epsilon$, and each occurs once per  $\Delta(C_{s \epsilon}^e, C_0^e)$ as $1 \leq s \leq \frac{e}{\epsilon}-1$.  Finally for (iii), for any $1 \leq r \leq e-1$ with $r \equiv i \bmod \epsilon$, if $1 \leq j \leq e-1$ with $i \neq j$ then all $(r,s \epsilon+j)$ equal $1$ (since $r \not\in \{0,s\epsilon+j\})$. Hence there are $\frac{e}{\epsilon}$ occurrences of each $C_r^{\epsilon}$ in each  $\Delta(C_{s \epsilon}^e,C_0^e)$.
\end{proof}

\begin{example} When $q=25$, $e=6$, $f=4$ and $\epsilon=3$, Theorem \ref{BMWTheorem} applies since $-1 \equiv 5^3 \mod 3$. In Theorem \ref{BMWTheorem} we take $r=-5$; then $\eta=-2$ and so $(0,0)_3=2^2-1=3$, $(i,0)_3=4-2=2$ ($i \neq 0$) and $(i,j)_3=(-2)^2=4$ ($0< i \neq j <3$). Since $3\mid 6$, we may alternatively apply Corollary \ref{UniformCor} from which $e/\epsilon=2$, $(0,0)_3=3+0=3$, $(i,0)_3=2 \times 1=2$ ($i \neq 0$) and $(i,j)_3=2^2=4$ ($0<i \neq j<3$).  When $\epsilon=2<3$, we cannot apply Theorem \ref{BMWTheorem}, but Corollary \ref{UniformCor} (with $(q-1)/\epsilon=12$) yields the expected results $(0,0)_2=(12-2)/2=5$ and $(1,0)_2=12/2=6$.
\end{example}

As mentioned at the outset, Theorem \ref{GeneralCycTheorem} also applies to the uniform cyclotomy situation in ${\rm GF}(p^{2s})$ where we consider the cyclotomic numbers of order $\epsilon\mid p^t+1$ where $t$ is a proper divisor of $s$. We illustrate with an example.

\begin{example}
Consider ${\rm GF}(81)$.  Here $q=p^{2s}$ where $p=3$ and $s=2$. Corollary \ref{UniformCor} shows that the cyclotomic numbers of order dividing $(p^4-1)/(p^2-1)=p^2+1=10$ are uniform.  Now consider the situation where $\epsilon=p+1=4$; by Theorem \ref{BMWTheorem}, $\epsilon$ satisfies the criteria to be uniform, but $4 \nmid p^2+1$.  Observe that $p+1\mid (p^4-1)/(p-1)=(p^2+1)(p+1)$.  So we may use Theorem \ref{GeneralCycTheorem} with $e=(3^4-1)/(3-1)=40$, $f=2$ and $\epsilon=4$ to obtain the cyclotomic numbers of order $4$, proceeding via the cyclotomic numbers of order $40$.  Obtaining the cyclotomic numbers of order $40$ is itself of interest, since no general results for $e=40$ exist in the literature.

Using the primitive polynomial $x^4+2x^3+2$, we obtain the difference set
$$I=\{0, 1, 2, 8, 16, 18, 23, 25, 28, 29, 34, 37, 38 \}.$$
This is a degree $4$ extension so the sets $S_k$ ($0<k<39$) are obtained by intersecting two translates of this difference set. For example $S_1=I \cap (-1+I)$, where $-1+I=\{0,1,7,15,17,22,24,27,28,33,36,37,39\}$.  The complete list of sets $S_k$ are as follows:
$$ \{0, 1, 28, 37\}, \{0, 2,18,25\}, \{0, 3, 4, 31\}, \{0, 5,11, 19\},\{0, 6, 14,35\}, $$
$$\{0, 7, 22, 24\}, \{0, 8, 29, 34\}, \{0, 9, 12, 13\},\{0, 10, 20, 30\}, \{0, 15, 17, 33\},$$
$$\{0, 16, 23, 38\}, \{0, 21, 26, 32\}, \{0, 27, 36, 39\}.$$
Apply Theorem \ref{GeneralCycTheorem} (i) to see that, since no $S_{4s}$ ($0<s<10$) contains an element $4t$ with $0<s \neq t<10$, we have $(0,0)_4=f-1=1$.  To apply Theorem \ref{GeneralCycTheorem} (ii), note that for each $r \in \{1,2,3\}$, precisely $6$ of the cyclotomic numbers $(r,4s)_{40}$ ($0<s<10$) are nonzero (specifically, have value $1$).  For example for $r=1$, $(29,8)_{40}=(9,12)_{40}=(13,12)_{40}=(1,28)_{40}=(37,28)_{40}=(21,32)_{40}=1$, while for $r=2$, $(34,8)_{40}=(38,16)_{40}=(10,20)_{40}=(30,20)_{40}=(22,24)_{40}=(26,32)_{40}=1$.  This indicates that $(0,i)_{4}=(i,0)_{4}=(i,i)_{4}=6$ for all $0<i<4$.  Finally, using Theorem \ref{GeneralCycTheorem} (iii), we can verify that $(i,j)_4=4$ for all $0<i \neq j <4$.  For example for $(2,1)_4$ we use the fact that $(32,21)_{40}=(2,25)_{40}=(28,25)_{40}=(34,29)_{40}=1$.  Hence the cyclotomic numbers of order $p+1=4$ are uniform. Theorem \ref{BMWTheorem} yields the same result upon taking $r=9$ and $\eta=2$.
\end{example}

Since the previous section gives a process for obtaining the component cyclotomic numbers of order $e$, Theorem \ref{GeneralCycTheorem} gives an explicit method to find the cyclotomic numbers of any order dividing $(q^n-1)/(q-1)$ where $q$ is a prime power and $n \in \mathbb{N}$ with $n \geq 2$. Lemma \ref{Storer} often significantly reduces the calculation required.

\begin{example}
We demonstrate the use of Theorem \ref{GeneralCycTheorem} to directly obtain the cyclotomic numbers of order $13$ in $ {\rm GF}(729)$. As mentioned previously, no direct result for cyclotomic numbers of order $13$ is known in general.   Set $q=9$ and $n=3$ in Theorem \ref{GeneralCycTheorem}, i.e. consider $ {\rm GF}(729)$ as a degree $3$ extension over $ {\rm GF}(9)$.  We use the primitive poynomial $x^3 + x + w$ where $w$ is given by $y^2 + 2y + 2$.   Here $e=91$, $f=8$ and we take $\epsilon=13$.  We proceed via the cyclotomic numbers of order $91$ in $ {\rm GF}(729)$; we use the Singer difference set $I=\{0,1,3,9,27,49,56,61,77,81\} \subseteq \mathbb{Z}_{91}$ and its $0$-containing translates $\{-i+I: i \in I\}$:
$$\{10,11,13,19,37,59,66,71,87,0\},\{14,15,17,23,41,63,70,75,0,4\},$$
$$\{30,31,33,39,57,79,86,0,16,20\}, \{35,36,38,44,62,84,0,5,21,25\},$$
$$\{42,43,45,51,69,0,7,12,28,32\}, \{64,65,67,73,0,22,29,34,50,54\},$$
$$ \{82,83,85,0,18,40,47,52,68,72\}, \{88,89,0,6,24,46,53,58,74,78\},$$
$$ \{90,0,2,8,26,48,55,60,76,80\}.$$
Here $(0,0)_{13}=f-1=7$, since for each $13s$ ($1 \leq s \leq 6$), the unique translate of $I$ containing $0$ and $13s$ contains no other $13t$ ($1 \leq t \leq 6$), and hence $\sum_{s=1}^{6} \sum_{\substack{r \equiv 0 \bmod 13,\\ 1 \leq r \leq 90}} (r,13s)_{91}=0$.  By similarly evaluating the cyclotomic numbers of the form $(r,13s)_{91}$ where $r \equiv i \mod 13$ ($0<i<91$) using the difference sets listed above, the cyclotomic numbers of the form $(x,0)_{13}$ ($x \neq 0$) are found to be $(1,0)_{13}=4$, $(2,0)_{13}=4$, $(3,0)_{13}=4$, $(4,0)_{13}=2$, $(5,0)_{13}=4$, $(6,0)_{13}=4$, $(7,0)_{13}=6$, $(8,0)_{13}=6$, $(9,0)_{13}=4$, $(10,0)_{13}=2$, $(11,0)_{13}=4$.  The cyclotomic numbers of the form $(x,1)_{13}$ are given by $(0,1)_{13}=4$, $(1,1)_{13}=2$, $(2,1)_{13}=2$, $(3,1)_{13}=6$, $(4,1)_{13}=6$, $(5,1)_{13}=5$, $(6,1)_{13}=4$, $(7,1)_{13}=6$, $(8,1)_{13}=2$, $(9,1)_{13}=6$, $(10,1)_{13}=6$, $(11,1)_{13}=5$ and $(12,1)_{13}=2$. The same process yields all of the other cyclotomic numbers of order $13$.
\end{example}

\section{Applications to difference structures}

In this section, we briefly discuss applications of our results in the setting of difference structures.  There has been much interest in the literature in using cyclotomic classes and their unions to form various types of difference families and difference sets, beginning with the work of Wilson \cite{Wil} in the 1970s.

The following structure was introduced in \cite{PatSti}, motivated by an application in information security. For a subset $S$ of a group $G$, we denote by $\lambda S$ the multiset comprising $\lambda$ copies of $S$.
\begin{definition}
Let $G$ be an abelian group of order $n$, written additively.  A collection of $m$ disjoint $k$-subsets $\mathcal{D}^{\prime} = \{D_1,\ldots,D_m\}$ of $G$ forms an $(n,m,k,\lambda)$-Strong External Difference Family (SEDF) of $G$ if the following multiset equation holds for each $D_i \in \mathcal{D}^{\prime}$:
\begin{equation*}
\bigcup_{j: j \neq i} \Delta(D_i,D_j) = \lambda(G \setminus \{0\}).
\end{equation*}
\end{definition}

SEDFs have attracted particular interest since only one set of parameters with $m>2$ is known.  Constructions achieving these parameters were found independently by two sets of authors in \cite{JedLi} and \cite{WenYanFuFen}.  The construction in \cite{WenYanFuFen} uses the cyclotomic classes of order $11$ in $ {\rm GF}(243)$.

\begin{example}
Cyclotomic numbers of order $11$ in $ {\rm GF}(243)$ were required in \cite{WenYanFuFen} to establish existence of the $(243,11,22,20)$-SEDF.  While a treatment of the $e=11$ cyclotomic numbers using Jacobi sums is given in \cite{LeoWil}, it is very cumbersome to use. Consequently the authors presented four pages of direct calculations in the field (given in \cite{WenArxiv}) to establish that the values of $(0,0)_{11}=1$ and $(i,0)_{11}=2$ ($1 \leq i \leq 10$).  We show how our approach offers a shorter, alternative method of evaluation.

Take $q=3$, $n=5$, $e=121$, $f=2$ and $\epsilon=11$ in Theorem \ref{n>4}, and use the Singer difference set $$I=\{1,3,4,7,9,11,12,13,21,25,27,33,34,36,39,44,55,63,64,67,68,70,71,75,80,81, $$
$$ 82,83,85,89,92,99,102,103,104,108,109,115,117,119\}$$ given in \cite{Handbook} (or generated by Construction \ref{Handbook}).  By Theorem \ref{hyperplanes}, we obtain
$$L= I \cap (-11+I) \cap (-22+I)=\{33,70,81,108\};$$ 
here 
$$L \cap (-11+L)=\{70\}$$ and translation of $L$ by  $-70$ yields $S_{11}=\{0,11,38,84\}$. Similarly, upon intersection of $I, -22+I$ and  $-44+I$ and appropriate translation,  $S_{22}=\{0,1,22,52\}$.  Hence $(38,11)_{121}=(84,11)_{121}=1=(1,22)_{121}=(52,22)_{121}$ and all other $(x,11)_{121}$ and $(x,22)_{121}$ equal $0$ ($1 \leq x \leq 120$). 

Repeated application of $(i,j)_e=(3i,3j)_e$ from Lemma \ref{Storer} yields all other cyclotomic numbers $(i,j)_e$ with $\epsilon\mid j$ which take nonzero values: $(10,33)_{121}=(114,33)=(53,44)_{121}=(28,44)_{121}=(58,55)_{121}=(90,55)_{121}=(3,66)_{121}=(35,66)_{121}=(9,77)_{121}=(105,77)_{121}=(81,88)_{121}=(98,88)_{121}=(30,99)_{121}=(100,99)_{121}=(27,110)_{121}=(73,110)_{121}=1$. Observe that of these 20 nonzero cyclotomic numbers $(r,s \epsilon)$ ($1 \leq s \leq \frac{e}{\epsilon}-1=10$), none have $\epsilon\mid r$ and for each $1 \leq i \leq 10$ there are precisely 2 values of $r \equiv i \mod {13}$.  Theorem \ref{GeneralCycTheorem} applies to show that $(0,0)_{11}=f-1=1$ and $(i,0)_{11}=2$ for all $1 \leq i \leq 10$.
\end{example}

\section{Conclusion}
The sparsity of cyclotomic number results for specific orders $e$ (indeed, the lack of results for $e > 24$), and the cumbersome nature of obtaining explicit results from the character theory approach for all but small $e$, has been a limiting factor on the practical use of cyclotomy. Until now, uniform cyclotomy has been the only explicit theoretical tool available, but is somewhat limited in its scope.  In this paper, we have presented a generalisation of uniform cyclotomy which is expressed in purely combinatorial terms,  and applies to all cyclotomic numbers of order $e \mid (q^n-1)/(q-1)$ over ${\rm GF}(q^n)$, for any prime power $q$ and $n \geq 2$.  We have presented a direct method of obtaining such cyclotomic numbers, which requires neither use of character theory nor direct calculation with field elements.  We hope that this will go some way towards demystifying cyclotomy for practitioners, and will facilitate an easier and more effective use of cyclotomy in various applications, as well as leading to further research in this direction.

\subsection*{Acknowledgements}
We thank Jim Davis for helpful discussions.  We thank the anonymous referee for their careful reading and helpful comments.

\end{document}